\newcommand{\Z}{\mathbb{Z}}
\newcommand{\R}{\mathbb{R}}
\newcommand{\F}{\mathbb{F}}
\newcommand{\X}{\mathbf{X}}
\newcommand{\Y}{\mathbf{Y}}
\newcommand{\cc}{\mathfrak{c}}
\newcommand{\OO}{\mathcal{O}}
\renewcommand{\AA}{\mathfrak{A}}
\newcommand{\valpha}{\vec{\alpha}}
\renewcommand{\d}{\vec{d}}
\newcommand{\vomega}{\vec{\omega}}
\newcommand{\ra}{\rightarrow}
\newcommand{\tst}{\textstyle}
\newcommand{\Gt}{\widetilde{G}}
\newcommand{\id}{\rm{id}}
\newcommand{\oneb}{\overline{1}}
\newcommand{\sigmab}{\overline{\sigma}}
\newcommand{\taub}{\overline{\tau}}
\newcommand{\onet}{\widetilde{1}}
\newcommand{\sigmat}{\widetilde{\sigma}}
\newcommand{\taut}{\widetilde{\tau}}
\DeclareMathOperator{\ch}{char}
\DeclareMathOperator{\Gal}{Gal}
\DeclareMathOperator{\Frac}{Frac}
\DeclareMathOperator{\rank}{rank}
\newcommand{\Lrightarrow}{\hbox to1cm{\rightarrowfill}}
\newcommand{\Ldownarrow}{\bigg\downarrow}
\newtheorem{theorem}{Theorem}
\newtheorem{lemma}[theorem]{Lemma}
\newtheorem{prop}[theorem]{Proposition}
\newtheorem{cor}[theorem]{Corollary}
\theoremstyle{definition}
\newtheorem{remark}[theorem]{Remark}
\newtheorem{definition}[theorem]{Definition}
\newtheorem{example}[theorem]{Example}
\numberwithin{equation}{section}
\numberwithin{theorem}{section}
\title{Galois scaffolds for $p$-extensions in
characteristic $p$}
\author{G. Griffith Elder \\
Department of Mathematics \\
University of Nebraska Omaha \\
Omaha, NE 68182 \\
USA \\[.2cm]
{\tt elder@unomaha.edu}
\and
Kevin Keating \\
Department of Mathematics \\
University of Florida \\
Gainesville, FL 32611 \\
USA \\[.2cm]
{\tt keating@ufl.edu}}
\begin{document}

\maketitle

\begin{abstract}
Let $K$ be a local field of characteristic $p>0$ with
perfect residue field and let $G$ be a finite $p$-group.
In this paper we use Saltman's construction of a generic
$G$-extension of rings of
characteristic $p$ to construct totally ramified
$G$-extensions $L/K$ that have Galois scaffolds.  We
specialize this construction to produce $G$-extensions
$L/K$ such that the ring of integers $\OO_L$ is free of
rank 1 over its associated order $\AA_0$, and extensions
such that $\AA_0$ is a Hopf order in the group ring
$K[G]$.
\end{abstract}
{\bf Keywords:} generic extensions, ramification, Galois
module structure, Galois scaffold, Hopf order.
\\[2mm]
{\bf MSC Classification:} Primary: 11S15; Secondary:
11R33, 14L15, 16T05.

\section{Introduction}

Let $p$ be prime and let $G$ be a group of order $p^n$.
In \cite{salt} Saltman constructed a Galois ring
extension $S/R$ with Galois group $G$, where $S$ and
$R$ are polynomial rings in $n$ variables over
$\F_p=\Z/p\Z$.  Saltman's extension is generic in the
sense that every $G$-extension of commutative rings of
characteristic $p$ is induced by $S/R$.  In this paper
we use a slightly modified version of Saltman's
construction to answer some existence questions
regarding $G$-extensions of local fields of
characteristic $p$.

     Let $K$ be a local field of characteristic $p$ and
let $u_1<u_2<\cdots<u_n$ be positive integers which are
relatively prime to $p$.  Maus \cite{maus} showed that
if $u_i>pu_{i-1}$ for $2\le i\le n$ then there is a
totally ramified $C_{p^n}$-extension $L/K$ whose upper
ramification breaks are $u_1,u_2,\ldots,u_n$.  We use
generic extensions to generalize Maus's result: Given a
$p$-group $G$ and a composition series for $G$, there
exists a constant $M\ge1$ that depends only on $G$ and
the composition series, such that if $u_i>Mu_{i-1}$ for
$2\le i\le n$ then there is a totally ramified
$G$-extension $L/K$ whose upper ramification breaks are
$u_1,u_2,\ldots,u_n$.
     
     Let $K$ be a local field with residue
characteristic $p$ and let $L/K$ be a finite totally
ramified Galois extension whose Galois group
$G=\Gal(L/K)$ is a $p$-group.  A Galois scaffold for
$L/K$ is a set of data that facilitates computation
of the Galois module structure of the ring of integers
$\OO_L$ of $L$ and of its ideals \cite{bce}.  While it
seems clear that for most extensions a Galois scaffold
cannot be constructed, many of the wildly ramified
Galois $p$-extensions $L/K$ for which there is some
understanding of the Galois module structure of $\OO_L$
do in fact admit a Galois scaffold.  In this paper we
show that if $\ch(K)=p$ then for every $p$-group $G$
there exist $G$-extensions with Galois scaffolds.  As
applications we show that for every $p$-group $G$ there
are $G$-extensions $L/K$ such that the ring of integers
$\OO_L$ of $L$ is free of rank 1 over its associated
order $\AA_0$, and there are $G$-extensions such
that $\AA_0$ is a Hopf order.  Hence our constructions
produce an interesting new family of Hopf orders in the
group ring $K[G]$.

     Throughout the paper we let $K$ be a local field
with perfect residue field; unless otherwise stated, $K$
has characteristic $p$.  Let $K^{sep}$ be a separable
closure of $K$.  For each finite subextension $L/K$ of
$K^{sep}/K$ let $v_L$ be the valuation on $K^{sep}$
normalized so that $v_L(L^{\times})=\Z$ and let $\OO_L$
be the ring of integers of $L$.

\section{$p$-filtered groups}

In this section we give the definition of $p$-filtered
groups and record some basic facts about these objects.

\begin{definition}
A {\em $p$-filtered group} is a pair $(G,\{G_{(i)}\})$
consisting of a group $G$ of order $p^n$ and a
composition series
\[\{1\}=G_{(n)}< G_{(n-1)}<\dots<G_{(1)}<G_{(0)}=G\]
for $G$ such that $G_{(i)}\trianglelefteq G$ and
$|G_{(i)}|=p^{n-i}$ for $0\le i\le n$.
\end{definition}

     We often denote the $p$-filtered group
$(G,\{G_{(i)}\})$ simply by $G$.  If $G$ is a
$p$-filtered group then $G/G_{(i)}$ is also a $p$-filtered
group, with subgroups
\[G_{(i)}/G_{(i)}<G_{(i-1)}/G_{(i)}<\cdots<G_{(1)}/G_{(i)}
< G_{(0)}/G_{(i)}.\]
Let $G$ be a $p$-filtered group of order $p^n$.  Define
\[\Sigma_G=\{1\le i\le n:\text{The extension $G/G_{(i)}$
of $G/G_{(i-1)}$ by $G_{(i-1)}/G_{(i)}$ is split.}\}.\]
In addition, for $0\le i\le n$ set
$\Sigma_G^i=\{j\in \Sigma_G:j\le i\}$.

     For a finite group $G$ we let $\Phi(G)$ denote the
Frattini subgroup of $G$.  Thus $\Phi(G)$ is the
intersection of the maximal proper subgroups of $G$.
Let $G$ and $H$ be finite groups.  We note the following
facts, which may be found in \cite{mil}:
\begin{enumerate}
\item $\Phi(G\times H)=\Phi(G)\times\Phi(H)$.  
\item If $G$ is a $p$-group then $\Phi(G)$ is the
smallest $N\trianglelefteq G$ such that $G/N$ is an
elementary abelian $p$-group (Burnside's basis
theorem).
\end{enumerate}
The rank of the $p$-group $G$ is defined to be the rank
of the elementary abelian $p$-group $G/\Phi(G)$.  It
follows that $\rank(G)$ is equal to the cardinality of
any minimal generating set for $G$.  We will need the
following elementary result:

\begin{prop}
For $1\le i\le n$ we have
\[\rank(G/G_{(i)})=\begin{cases}
\rank(G/G_{(i-1)})+1&\text{if }i\in\Sigma_G, \\
\rank(G/G_{(i-1)})&\text{if }i\not\in\Sigma_G.
\end{cases}\]
\end{prop}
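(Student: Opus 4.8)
The plan is to fix $i$ with $1\le i\le n$ and reduce the statement to a question about a single central extension. Put $\bar G=G/G_{(i)}$, $\bar Q=G/G_{(i-1)}$ and $N=G_{(i-1)}/G_{(i)}$. Then $N\trianglelefteq\bar G$ with $|N|=p$, so $N$ lies in the center of the $p$-group $\bar G$, and $\bar G/N\cong\bar Q$; moreover $i\in\Sigma_G$ exactly when the resulting central extension $1\to N\to\bar G\xrightarrow{\pi}\bar Q\to 1$ splits, i.e.\ when $N$ admits a complement in $\bar G$. Thus it suffices to prove that $\rank(\bar G)=\rank(\bar Q)+1$ when this extension splits and $\rank(\bar G)=\rank(\bar Q)$ otherwise. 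I would do this by showing that (a) the rank jumps by $1$ precisely when $N\not\subseteq\Phi(\bar G)$, and (b) $N\not\subseteq\Phi(\bar G)$ precisely when the extension splits.

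For (a), I would first check that $\pi(\Phi(\bar G))=\Phi(\bar Q)$: by Burnside's basis theorem (fact 2) $\bar G/\Phi(\bar G)$ is elementary abelian, hence so is its further quotient $\bar Q/\pi(\Phi(\bar G))$, and minimality of $\Phi(\bar Q)$ forces $\Phi(\bar Q)\subseteq\pi(\Phi(\bar G))$; conversely $\bar G/\pi^{-1}(\Phi(\bar Q))\cong\bar Q/\Phi(\bar Q)$ is elementary abelian, and minimality of $\Phi(\bar G)$ forces $\Phi(\bar G)\subseteq\pi^{-1}(\Phi(\bar Q))$, i.e.\ $\pi(\Phi(\bar G))\subseteq\Phi(\bar Q)$. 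Consequently $\pi$ induces a surjection $\bar\pi\colon\bar G/\Phi(\bar G)\twoheadrightarrow\bar Q/\Phi(\bar Q)$ of $\F_p$-vector spaces with kernel $\Phi(\bar G)N/\Phi(\bar G)\cong N/(N\cap\Phi(\bar G))$; since $|N|=p$ this kernel has $\F_p$-dimension $1$ if $N\not\subseteq\Phi(\bar G)$ and dimension $0$ if $N\subseteq\Phi(\bar G)$. Comparing dimensions gives $\rank(\bar G)=\rank(\bar Q)+\dim_{\F_p}\bigl(N/(N\cap\Phi(\bar G))\bigr)$, which is (a).

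For (b): if $N\not\subseteq\Phi(\bar G)$, then since $\Phi(\bar G)$ is the intersection of the maximal subgroups of $\bar G$ there is a maximal subgroup $M$ with $N\not\subseteq M$; as $M$ is normal of index $p$ in the $p$-group $\bar G$ and $N\cap M\subsetneq N$ has order $1$, we get $\bar G=MN$ with $M\cap N=\{1\}$, so $M$ is a complement to $N$ and the extension splits. Conversely, if the extension splits, let $M$ be a complement to $N$; since $N$ is central, $\bar G=M\times N$, so by fact 1, $\Phi(\bar G)=\Phi(M)\times\Phi(N)=\Phi(M)$ (as $\Phi(N)=\{1\}$), whence $\Phi(\bar G)\subseteq M$ and $N\cap\Phi(\bar G)=\{1\}$, so $N\not\subseteq\Phi(\bar G)$. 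Combining (a) and (b) gives the Proposition. The argument is essentially routine; the only steps needing genuine care are the identity $\pi(\Phi(\bar G))=\Phi(\bar Q)$ with the accompanying identification of $\ker\bar\pi$, together with the basic remark that a normal subgroup of order $p$ in a $p$-group is central, so that a complement to it is automatically a direct factor.
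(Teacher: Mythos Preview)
Your proof is correct. Both you and the paper handle the split case the same way: the central subgroup $N$ of order $p$ has a complement which is automatically a direct factor, so $\bar G\cong\bar Q\times C_p$ and fact~1 gives the rank increase. The difference is in the non-split case. The paper lifts a minimal generating set of $\bar Q$ to $\bar G$ and argues directly that the lifts still generate: if they generated only a proper subgroup $H$, then $|H|=p^{i-1}$ and $H\cap N=\{1\}$, so $H$ would be a complement to the central subgroup $N$, contradicting non-splitness. Your argument is more structural: you establish $\pi(\Phi(\bar G))=\Phi(\bar Q)$, identify the kernel of the induced map on Frattini quotients as $N/(N\cap\Phi(\bar G))$, and then show $N\subseteq\Phi(\bar G)$ exactly when the extension is non-split. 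Your route makes the dichotomy (rank jumps iff $N\not\subseteq\Phi(\bar G)$ iff split) completely transparent and would generalize cleanly to larger central kernels; the paper's argument is shorter and more self-contained, needing only the definition of rank as the size of a minimal generating set rather than the auxiliary identity $\pi(\Phi(\bar G))=\Phi(\bar Q)$.
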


\begin{proof}
If $i\in \Sigma_G$ then since $G/G_{(i)}$ is a $p$-group
and $G_{(i-1)}/G_{(i)}$ is cyclic of order $p$ we have
$G/G_{(i)}\cong(G/G_{(i-1)})\times C_p$, and hence
\[(G/G_{(i)})/\Phi(G/G_{(i)})
\cong((G/G_{(i-1)})/\Phi(G/G_{(i-1)}))\times C_p.\]
Therefore $\rank(G/G_{(i)})=\rank(G/G_{(i-1)})+1$.  If
$i\not\in\Sigma_G$ let $A$ be a subset of $G$ such that
$|A|=\rank(G/G_{(i-1)})$ and $\{aG_{(i-1)}:a\in A\}$
generates $G/G_{(i-1)}$.  Then
$H=\langle aG_{(i)}:a\in A\rangle$ is a subgroup of
$G/G_{(i)}$ such that $|H|\ge|G/G_{(i-1)}|=p^{i-1}$.  If
$|H|=p^{i-1}$ then $H\cap(G_{(i-1)}/G_{(i)})$ is
trivial.  Hence $G/G_{(i)}$ is the product of $H$ and
the central subgroup $G_{(i-1)}/G_{(i)}$, which
contradicts the assumption $i\not\in\Sigma_G$.
Therefore $H=G/G_{(i)}$, and hence
$\rank(G/G_{(i)})=\rank(G/G_{(i-1)})$.
\end{proof}

\begin{cor} \label{easy}
For $1\le i\le n$ we have $\rank(G/G_{(i)})=|\Sigma_G^i|$.
In particular, $\rank(G)=|\Sigma_G|$.
\end{cor}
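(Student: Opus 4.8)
The plan is to prove the formula $\rank(G/G_{(i)})=|\Sigma_G^i|$ by induction on $i$, with the preceding Proposition supplying the inductive step. The ``in particular'' assertion is then simply the case $i=n$, since $G_{(n)}=\{1\}$ gives $G/G_{(n)}=G$ and $\Sigma_G^n=\Sigma_G$.

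For the base case $i=1$: the subgroup $G_{(1)}$ has index $p$ in $G$, so $G/G_{(1)}\cong C_p$ and hence $\rank(G/G_{(1)})=1$. On the other hand, the extension of $G/G_{(0)}=\{1\}$ by $G_{(0)}/G_{(1)}=G/G_{(1)}$ is automatically split, so $1\in\Sigma_G$ and therefore $\Sigma_G^1=\{1\}$ has cardinality $1$. Thus the claim holds for $i=1$. (Equivalently, one may take $i=0$ as the base case, with $G/G_{(0)}=\{1\}$ of rank $0$ and $\Sigma_G^0=\emptyset$; the two choices are consistent precisely because $1\in\Sigma_G$ always.)

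Now suppose $2\le i\le n$ and that $\rank(G/G_{(i-1)})=|\Sigma_G^{i-1}|$. Directly from the definition, $\Sigma_G^i$ is the disjoint union of $\Sigma_G^{i-1}$ with $\{i\}\cap\Sigma_G$, so $|\Sigma_G^i|=|\Sigma_G^{i-1}|+1$ if $i\in\Sigma_G$ and $|\Sigma_G^i|=|\Sigma_G^{i-1}|$ if $i\not\in\Sigma_G$. By the Proposition, $\rank(G/G_{(i)})$ exceeds $\rank(G/G_{(i-1)})$ by exactly $1$ when $i\in\Sigma_G$ and equals it when $i\not\in\Sigma_G$. Combining these two observations with the inductive hypothesis gives $\rank(G/G_{(i)})=|\Sigma_G^i|$ in both cases, which completes the induction.

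There is essentially no obstacle here: all of the substance lies in the Proposition, and the Corollary is a bookkeeping argument. The only point that requires a moment's care is the convention at the bottom of the filtration, i.e.\ whether to start the induction at $i=0$ or $i=1$, together with the remark that $1\in\Sigma_G$ for every $p$-filtered group, which is what reconciles the two starting points.
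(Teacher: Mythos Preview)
Your proof is correct and is exactly the argument the paper has in mind: the corollary is stated without proof, as it follows immediately from the preceding Proposition by the straightforward induction you give. Your verification that $1\in\Sigma_G$ (making the base case work) is the only detail worth mentioning, and you handle it correctly.
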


\section{Generic $G$-extensions of commutative rings}
\label{generic}

Let $G$ be a $p$-group.
In this section we describe a version of Saltman's
construction of a generic $G$-extension of commutative
rings \cite{salt}.  The generic $G$-extension $S/R$
constructed here is somewhat more general than that
given in \cite{salt}, in that we don't require the
Frattini subgroup of the $p$-group $G$ to appear in our
filtration of $G$.  Unlike Saltman, who considers
specializations of $S/R$ to ring extensions, we
only consider field extensions, since that is the case
that we need for our applications.

\begin{definition}
Let $S$ be a commutative ring with 1, let $G$ be a
finite group of automorphisms of $S$, and set
\[R=S^G=\{x\in S:\sigma(x)=x\text{ for all }\sigma\in G\}.\]
Say that $S/R$ is a Galois extension with group $G$ if
for every maximal ideal $M\subset S$ and every
$\sigma\in G$ with $\sigma\not=1$ there is $s\in S$ with
$\sigma(s)-s\not\in M$.
\end{definition}

     See \cite[p.\,81]{DI} for alternative
characterizations of Galois extensions of rings.  In
general, for a ring extension $S/R$ there may exist more
than one group $G$ of automorphisms of $S$ such that
$R=S^G$ and $S/R$ is Galois with group $G$.  However, if
$S/R$ is a Galois extension with group $G$ and $S,R$ are
integral domains then by setting $E=\Frac(S)$ and
$F=\Frac(R)$ we get a Galois extension of fields $E/F$
such that $\Gal(E/F)\cong G$.  In this case $G$ is equal
to the group of all $R$-automorphisms of $S$, so it
makes sense to say that $S/R$ is a Galois extension
without specifying a group of automorphisms of $S$.

     If $R$ is a ring of characteristic $p$ then the
simplest nontrivial Galois $p$-extensions of $R$ are
Artin-Schreier extensions.  Saltman gives some
properties of these extensions in Theorem~1.3 of
\cite{salt}:

\begin{prop}
Let $R$ be a ring of characteristic $p$, let $c\in R$,
and set $S=R[X]/(X^p-X-c)$.  Set $v=X+(X^p-X-c)$ and let
$\sigma$ be the unique automorphism of $S$ which fixes
$R$ and satisfies $\sigma(v)=v+1$.  Then $S/R$ is a
Galois extension with group $\langle\sigma\rangle$.
\end{prop}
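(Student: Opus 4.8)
The plan is to verify the two requirements implicit in the preceding definition — that $R$ coincides with the fixed ring $S^{\langle\sigma\rangle}$, and that $\langle\sigma\rangle$ separates the maximal ideals of $S$ — after first recording the basic structure of $S$ and of the automorphism $\sigma$. Since $X^p-X-c$ is monic of degree $p$, the ring $S$ is free as an $R$-module with basis $1,v,v^2,\dots,v^{p-1}$. Because $\ch(R)=p$ we have $(X+1)^p-(X+1)-c=X^p-X-c$, so the $R$-algebra endomorphism $X\mapsto X+1$ of $R[X]$ carries the ideal $(X^p-X-c)$ into itself and hence descends to an $R$-algebra endomorphism $\sigma$ of $S$ with $\sigma(v)=v+1$; it is the unique $R$-automorphism with this property since $v$ generates $S$ over $R$. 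Iterating gives $\sigma^k(v)=v+k$, so $\sigma^p=\id$ (whence $\sigma$ is genuinely an automorphism, with inverse $\sigma^{p-1}$), while $\sigma^k\ne\id$ for $1\le k\le p-1$ because $k\ne0$ in $R$ and $1,v$ are distinct members of an $R$-basis of $S$. Thus $\langle\sigma\rangle$ is cyclic of order exactly $p$.

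The heart of the matter is the identity $S^{\langle\sigma\rangle}=R$. I would study the $R$-linear operator $\Delta=\sigma-\id$ on $S$, noting that for $i\ge1$ we have $\Delta(v^i)=(v+1)^i-v^i=\sum_{j=0}^{i-1}\binom{i}{j}v^j$, a polynomial in $v$ of degree $i-1$ with leading coefficient $\binom{i}{i-1}=i$, while $\Delta(1)=0$. Suppose $y=\sum_{i=0}^{p-1}a_iv^i$ satisfies $\sigma(y)=y$, i.e.\ $\Delta(y)=0$, and suppose some $a_i$ with $i\ge1$ is nonzero; let $m$ be the largest such index. Then among the summands $\Delta(a_iv^i)$ only the one with $i=m$ reaches degree $m-1$ in $v$, so the $v^{m-1}$-coordinate of $\Delta(y)$ equals $m a_m$, which is nonzero since $m$ is a unit in $R$ (being a nonzero element of $\F_p\subseteq R$) and $a_m\ne0$. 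This contradicts $\Delta(y)=0$, so $y=a_0\in R$; the reverse inclusion $R\subseteq S^{\langle\sigma\rangle}$ holds by construction of $\sigma$.

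Finally, for the separation condition, given any maximal ideal $M\subset S$ and any $k$ with $1\le k\le p-1$ (so that $\sigma^k\ne\id$ exhausts the nontrivial elements of $\langle\sigma\rangle$), take $s=v$: then $\sigma^k(s)-s=k$ is a unit of $R\subseteq S$ and hence does not lie in the proper ideal $M$. This verifies the definition and shows that $S/R$ is a Galois extension with group $\langle\sigma\rangle$. The only step demanding any thought is the fixed-ring computation, and even there the sole nontrivial input is the elementary fact that the integers $1,\dots,p-1$ are units in a ring of characteristic $p$; everything else is bookkeeping with the $R$-basis $\{v^i\}$.
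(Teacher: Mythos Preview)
Your argument is correct. You verify exactly the two conditions in the paper's definition: that $R=S^{\langle\sigma\rangle}$ (via the degree argument with $\Delta=\sigma-\id$, using that $1,\dots,p-1$ are units in $\F_p\subset R$) and the separation condition (via $\sigma^k(v)-v=k$ being a unit). The preliminary checks that $\sigma$ is a well-defined automorphism of order $p$ are also fine.

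As for comparison with the paper: there is nothing to compare, because the paper does not supply its own proof of this proposition. It merely records the statement and attributes it to Theorem~1.3 of Saltman~\cite{salt}. Your direct verification is the standard one and is entirely self-contained, which is arguably preferable in an expository setting to an external citation for such an elementary fact.
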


     We will also use the following fact, which is
proved as Corollary~1.3(3) in Chapter~III of \cite{DI}:

\begin{prop} \label{tensor}
Let $S/R$ be a Galois extension of rings with group
$G$ and let $T$ be a commutative $R$-algebra.  Then the
action of $G$ on $T\otimes_RS$ defined by
$\sigma(t\otimes s)=t\otimes\sigma(s)$ makes
$T\otimes_RS$ a Galois extension of $T$.
\end{prop}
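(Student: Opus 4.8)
The plan is to deduce the statement from the standard equivalent characterizations of Galois extensions of commutative rings recorded in \cite{DI}, the point being that these characterizations are manifestly stable under the base change $-\otimes_R T$. Write $A=T\otimes_R S$, and equip $A$ with the $G$-action $\sigma(t\otimes s)=t\otimes\sigma(s)$; this is well defined (each $\sigma$ fixes $R$ pointwise, hence is $R$-linear) and makes each $\sigma$ a $T$-algebra automorphism of $A$, with $T\otimes 1\subseteq A^G$. I also record that a Galois extension $S/R$ is faithfully flat (indeed $S$ is a finitely generated projective, faithful $R$-module, by \cite{DI}), so that $A$ is faithfully flat over $T$. It then suffices to verify the two base-changed conditions: (i) the norm map $h_A\colon A\otimes_T A\to\prod_{\sigma\in G}A$, $a\otimes b\mapsto(a\,\sigma(b))_\sigma$, is bijective, and (ii) $A^G=T$.

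For (i) I would start from the fact that $h_S\colon S\otimes_R S\to\prod_{\sigma\in G}S$, $a\otimes b\mapsto(a\,\sigma(b))_\sigma$, is bijective (one of the equivalent forms of ``$S/R$ is Galois with group $G$'' in \cite{DI}). Applying $T\otimes_R-$ and using the canonical identifications $T\otimes_R(S\otimes_R S)\cong A\otimes_T A$ and $T\otimes_R\prod_\sigma S\cong\prod_\sigma A$ --- neither of which requires any flatness of $T$ over $R$ --- one checks directly that $\mathrm{id}_T\otimes h_S$ becomes exactly $h_A$; since $h_S$ is bijective, so is $h_A$. (Equivalently, if $x_1,\dots,x_m,y_1,\dots,y_m\in S$ are Galois coordinates for $S/R$, i.e. $\sum_k x_k\sigma(y_k)=\delta_{1,\sigma}$, then $1\otimes x_k,\,1\otimes y_k$ are Galois coordinates for $A/T$, because $\sum_k(1\otimes x_k)\,\sigma(1\otimes y_k)=1\otimes\sum_k x_k\sigma(y_k)=\delta_{1,\sigma}$.)

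For (ii) I would invoke faithfully flat descent: since $T\to A$ is faithfully flat, $T$ is the equalizer of the two maps $A\rightrightarrows A\otimes_T A$ given by $a\mapsto a\otimes 1$ and $a\mapsto 1\otimes a$. Under the isomorphism $h_A$ from step (i), $a\otimes 1$ corresponds to the constant tuple $(a)_\sigma$ and $1\otimes a$ to $(\sigma(a))_\sigma$, so an element $a$ lies in that equalizer exactly when $\sigma(a)=a$ for all $\sigma\in G$, i.e. exactly when $a\in A^G$. Hence $A^G=T$. Combining (i) and (ii) with the characterization theorem of \cite{DI} gives that $A=T\otimes_R S$ is a Galois extension of $T$ with group $G$.

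The only step where one cannot be cavalier is (ii): because $T$ need not be flat over $R$, taking $G$-invariants does not commute with $-\otimes_R T$, so the equality $A^G=T$ cannot be read off directly from $S^G=R$; routing it instead through the faithful flatness of $A$ over $T$ (rather than of $S$ over $R$) is what makes it go through. Step (i), by contrast, is a purely formal base-change computation, as is the verification that the $G$-action on $A$ is well defined.
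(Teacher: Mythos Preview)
Your argument is correct. The paper, however, does not give its own proof of this proposition: it simply cites the result as Corollary~1.3(3) in Chapter~III of \cite{DI} and moves on. What you have written is essentially the standard proof behind that citation --- base-change the isomorphism $h_S$ (or equivalently the Galois coordinates) to get $h_A$, and then recover $A^G=T$ from faithful flatness of $A$ over $T$ via the Amitsur/descent exact sequence. Your remark that step~(ii) is the only delicate point, since $T$ need not be flat over $R$, is exactly right and is why the argument must pass through the faithful flatness of $A/T$ rather than attempt to tensor the equality $S^G=R$ directly. So there is nothing to correct; you have supplied a clean proof where the paper only records a reference.
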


     Let $S$ be a ring of characteristic $p$, and let
$G$ be a group of automorphisms of $S$ such that
$|G|=p^n$ and $S$ is a Galois extension of the subring
$R=S^G$ fixed by $G$.  In Lemma~1.1 of \cite{salt} it is
observed that $H^q(G,S)=0$ for all $q\ge1$.  Let $\Gt$
be a group of order $p^{n+1}$, let $\pi:\Gt\ra G$ be an
onto homomorphism, and set $H=\ker(\pi)$.  Let
$u:G\ra\Gt$ be a section of $\pi$.  Then the map
$g:G\times G\ra H$ defined by
$g(\sigma,\tau)=u(\sigma)u(\tau)u(\sigma\tau)^{-1}$ is a
2-cocycle.  Let $\chi:H\ra\F_p$ be an isomorphism; then
$c(\sigma,\tau)=\chi(g(\sigma,\tau))$ is a 2-cocycle
with values in $\F_p\subset S$.  Since $H^2(G,S)=0$
there is a cochain $(s_{\sigma})_{\sigma\in G}$ with
values in $S$ such that $c(\sigma,\tau)
=s_{\sigma}+\sigma(s_{\tau})-s_{\sigma\tau}$ for all
$\sigma,\tau\in G$.  Let $\wp(X)=X^p-X\in\F_p[X]$ be the
Artin-Schreier polynomial.  Since
$c(\sigma,\tau)\in\F_p$ we have
$\wp(s_{\sigma})+\sigma(\wp(s_{\tau}))
=\wp(s_{\sigma\tau})$ for all $\sigma,\tau\in G$.  Thus
$(\wp(s_{\sigma}))_{\sigma\in G}$ is a 1-cocycle with
values in $S$.  Since $H^1(G,S)=0$ there is $d\in S$
such that $\wp(s_{\sigma})=\sigma(d)-d$ for all
$\sigma\in G$.

     In Lemma~1.8 of \cite{salt}, Saltman proved the
following facts:

\begin{lemma} \label{basic}
Let $S/R$ be a Galois extension with group $G$, and let
$\Gt$, $H$, $d$ be as above.
\begin{enumerate}[(a)]
\item View $T=S[X]/(X^p-X-d)$ as an extension of $S$.
The group $G$ of automorphisms of $S$ extends to a group
of automorphisms of $T$ which is isomorphic to $\Gt$ and
makes $T/R$ a Galois extension.
\item Suppose $T'$ is an extension of $S$ such that
$T'/R$ is Galois with group $\Gt$ and $S$ is the fixed
ring of $H$. Then for some $r\in R$ there is an
isomorphism of $S$-algebras $T'\cong S[X]/(X^p-X-d-r)$.
\item If $S$ has no nontrivial idempotents and the
extension $\Gt$ of $G$ by $H$ is not split then
$d\not\in\wp(S)+R$.
\end{enumerate}
\end{lemma}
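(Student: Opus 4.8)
\medskip
\noindent\textbf{Proof strategy.}
All three parts rest on explicit computations with the Artin--Schreier generator $x$ of $T=S[X]/(X^p-X-d)$ over $S$, kept in step with the group-cohomology class of $\Gt$ via the cochain $(s_\sigma)$. One observation is used throughout: since $\Gt$ is a $p$-group, $|H|=p$, and $\mathrm{Aut}(H)$ has order $p-1$, the conjugation map $\Gt\to\mathrm{Aut}(H)$ is trivial, so $H$ is central in $\Gt$. For part (a) I would, for each $\sigma\in G$, define $\widetilde{\sigma}\colon T\to T$ to be the ring map that restricts to $\sigma$ on $S$ and sends $x\mapsto x+s_\sigma$; this is well defined precisely because $\wp(s_\sigma)=\sigma(d)-d$ forces $(x+s_\sigma)^p-(x+s_\sigma)=\sigma(d)$ in $T$. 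Let $H$ act $S$-linearly on $T$ by $x\mapsto x+\chi(h)$ (legitimate since $\chi(h)\in\F_p$). One computes $\widetilde{\sigma}\,\widetilde{\tau}=g(\sigma,\tau)\cdot\widetilde{\sigma\tau}$ (reading $g(\sigma,\tau)\in H$ as an operator on $T$) and $h\widetilde{\sigma}=\widetilde{\sigma}h$; hence the $\widetilde{\sigma}$ and $H$ generate a group $\Gamma$ of order $p^{n+1}$ realizing an extension of $G$ by $H$ with cocycle $g$, so $\Gamma\cong\Gt$ compatibly with the projection to $G$. Finally $T/R$ is Galois with group $\Gt$: one has $T^H=S$ (the Artin--Schreier extension $T/S$ is Galois with group $H$) and $S^G=R$, so $T^{\Gt}=R$; and for the separability condition, given $1\ne\widetilde{\sigma}\in\Gt$ and a maximal ideal $M\subset T$, if $\widetilde{\sigma}\notin H$ use that $S/R$ is Galois to find $s\in S$ with $\widetilde{\sigma}(s)-s=\sigma(s)-s\notin M$, while if $\widetilde{\sigma}\in H\setminus\{1\}$ then $\widetilde{\sigma}(x)-x=\chi(\widetilde{\sigma})\in\F_p^{\times}$ is a unit.

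For part (b), since $S=(T')^H$ the extension $T'/S$ is Galois with group $H\cong\F_p$, so by Artin--Schreier theory $T'\cong S[Y]/(Y^p-Y-d')$ for some $d'\in S$, with a fixed generator of $H$ acting by $Y\mapsto Y+1$. Using centrality of $H$, each lift $\widetilde{\sigma}'\in\Gt$ of $\sigma$ commutes with the $H$-action; writing $\widetilde{\sigma}'(Y)=\sum_{k}c_kY^k$ with $c_k\in S$ and imposing $\widetilde{\sigma}'(Y+1)=\widetilde{\sigma}'(Y)+1$ forces $\widetilde{\sigma}'(Y)=Y+s'_\sigma$ for some $s'_\sigma\in S$. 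Compatibility of $\widetilde{\sigma}'$ with the defining relation gives $\wp(s'_\sigma)=\sigma(d')-d'$, and the identity $u(\sigma)u(\tau)=g(\sigma,\tau)u(\sigma\tau)$ in $\Gt$ gives $c(\sigma,\tau)=s'_\sigma+\sigma(s'_\tau)-s'_{\sigma\tau}$ --- the same coboundary identity satisfied by $(s_\sigma)$. Hence $(s'_\sigma-s_\sigma)_\sigma$ is a $1$-cocycle, so by $H^1(G,S)=0$ there is $e\in S$ with $s'_\sigma-s_\sigma=\sigma(e)-e$; applying $\wp$ and comparing with the two formulas for $\wp(s_\sigma)$ and $\wp(s'_\sigma)$ shows $r:=d'-d-\wp(e)$ lies in $S^G=R$. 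Substituting $Y=Y'+e$ then yields $T'\cong S[Y']/(Y'^p-Y'-d-r)$ as $S$-algebras.

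For part (c) I would argue the contrapositive: assuming $d=\wp(s)+r$ with $s\in S$, $r\in R$, I produce a splitting of $\pi$. First, since $S$ has no nontrivial idempotents, $\{a\in S:a^p=a\}=\F_p$: if $a^p=a$ then $a^{p-1}$ is idempotent, hence $0$ or $1$, so $a=0$ or $a$ is a unit, which makes $\{a:a^p=a\}$ a subfield of $S$ all of whose elements satisfy $a^p=a$, forcing it to equal $\F_p$. From $\wp(s_\sigma)=\sigma(d)-d=\wp(\sigma(s)-s)$ we get $t_\sigma:=s_\sigma-\sigma(s)+s\in\F_p$, and a direct check gives $t_\sigma+\sigma(t_\tau)-t_{\sigma\tau}=c(\sigma,\tau)$. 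Since $\sigma$ fixes $\F_p$ and $H$ is central, $v(\sigma):=\chi^{-1}(t_\sigma)^{-1}u(\sigma)$ is a homomorphism $G\to\Gt$ with $\pi\circ v=\mathrm{id}_G$, so the extension $\Gt$ of $G$ by $H$ splits --- contradicting the hypothesis.

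The cohomological bookkeeping is routine once organized; I expect the main obstacle to be the step in part (b) showing that every lift $\widetilde{\sigma}'$ acts on the Artin--Schreier model of $T'$ by a translation $Y\mapsto Y+s'_\sigma$, which is precisely where centrality of $H$ (equivalently, $\Gt$ being a $p$-group) is needed; once this is in hand, the comparison of the two coboundary solutions via $H^1(G,S)=0$ is immediate. In part (c) the only non-formal input is the idempotent argument identifying $\ker(\wp|_S)$ with $\F_p$.
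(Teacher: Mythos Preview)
The paper does not give its own proof of this lemma; it simply attributes the result to Saltman (Lemma~1.8 of \cite{salt}) and quotes the statement. Your argument is a correct reconstruction of the standard cohomological proof, and is essentially what Saltman does: build the $\Gt$-action on $T$ from the cochain $(s_\sigma)$, compare two coboundings of the same $2$-cocycle via $H^1(G,S)=0$ for part~(b), and use the identification $\ker(\wp|_S)=\F_p$ (valid when $S$ has no nontrivial idempotents) to trivialize $c$ in $H^2(G,\F_p)$ for part~(c).

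Two small points worth tightening. In part~(b) you invoke ``Artin--Schreier theory'' to write $T'\cong S[Y]/(Y^p-Y-d')$; for rings this is not the classical field statement but the converse half of Saltman's Theorem~1.3 (the paper only quotes the forward half as Proposition~3.2), so you should cite it. Also in part~(b), to get $h\cdot Y=Y+\chi(h)$ you need to choose the Artin--Schreier generator $Y$ compatibly with the fixed isomorphism $\chi:H\to\F_p$; this is always possible (replace $Y$ by a suitable $\F_p^\times$-multiple), but it should be said. In part~(a), when verifying the Galois condition for $\gamma\notin H$ you implicitly use that $M\cap S$ is maximal in $S$; this holds because $T$ is finite free over $S$, hence integral.
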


     Using this lemma, we construct the generic
$G$-extension:

\begin{prop} \label{Di}
Let $(G,\{G_{(i)}\})$ be a $p$-filtered group
of order $p^n$.  Then for $1\le i\le n$ there are
polynomials $D_i\in\F_p[Y_1,\ldots,Y_{i-1}]$ with the
following properties:
\begin{enumerate}[(a)]
\item $D_i=0$ for $i\in \Sigma_G$, and $D_i\not\in\F_p$
for $i\not\in \Sigma_G$.
\item For $0\le i\le n$ set $R_i=\F_p[X_1,\ldots,X_i]$,
and define $S_0,S_1,\ldots,S_n$ recursively by
$S_0=\F_p$ and
$S_i=S_{i-1}[Y_i,X_i]/(Y_i^p-Y_i-D_i-X_i)$ for
$1\le i\le n$.  Then $S_i\cong\F_p[Y_1,\ldots,Y_i]$ and
$S_i/R_i$ is a Galois extension.
\item For $1\le i\le n$ let
$\pi_i:\Gal(S_i/R_i)\ra\Gal(S_{i-1}/R_{i-1})$ be the
homomorphism induced by restriction.  Then there are
isomorphisms $\lambda_i:\Gal(S_i/R_i)\ra G/G_{(i)}$ such
that for $1\le i\le n$ the following diagram commutes:
\[\setlength{\arraycolsep}{1pt}
\begin{array}{*{9}c}
\Gal(S_i/R_i)&\overset{\textstyle\pi_i}{\Lrightarrow}
&\Gal(S_{i-1}/R_{i-1}) \\[1mm]
\lambda_i\Ldownarrow
&&\hspace*{-8mm}\lambda_{i-1}\Ldownarrow&& \\[4mm]
G/G_{(i)}&\Lrightarrow&G/G_{(i-1)}.
\end{array}\]
\end{enumerate}
\end{prop}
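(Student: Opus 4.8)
The plan is to construct the $D_i$ together with the rings $S_i$ and the isomorphisms $\lambda_i$ by induction on $i$, using Proposition~\ref{tensor} to adjoin the variable $X_i$ and Lemma~\ref{basic} to adjoin $Y_i$. For $i=0$ put $S_0=R_0=\F_p$ and let $\lambda_0$ be the unique isomorphism of the trivial groups $\Gal(S_0/R_0)$ and $G/G_{(0)}=G/G$. Suppose $D_1,\dots,D_{i-1}$, a ring $S_{i-1}\cong\F_p[Y_1,\dots,Y_{i-1}]$ with $S_{i-1}/R_{i-1}$ Galois, and an isomorphism $\lambda_{i-1}\colon\Gal(S_{i-1}/R_{i-1})\to G/G_{(i-1)}$ have been constructed. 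Set $R':=R_{i-1}[X_i]=R_i$. By Proposition~\ref{tensor}, $S':=S_{i-1}\otimes_{R_{i-1}}R'=S_{i-1}[X_i]$ is a Galois extension of $R'$ on which $G/G_{(i-1)}$ acts through its action on $S_{i-1}$ while fixing $X_i$; in particular restriction to $S_{i-1}$ is an isomorphism $\Gal(S'/R')\to\Gal(S_{i-1}/R_{i-1})$, which we compose with $\lambda_{i-1}$ to identify $\Gal(S'/R')$ with $G/G_{(i-1)}$.

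Next, $G/G_{(i)}$ is an extension of $G/G_{(i-1)}$ by $G_{(i-1)}/G_{(i)}\cong C_p$, so we may run the cochain construction preceding Lemma~\ref{basic}; to keep $D_i$ independent of $X_i$ we carry it out over $S_{i-1}/R_{i-1}$. If $i\in\Sigma_G$ the group extension splits, so we may take the section $u$ to be a homomorphism; then the $2$-cocycle $c$ vanishes, we may take $s_\sigma=0$ for all $\sigma$, and we set $D_i:=0$. If $i\notin\Sigma_G$ the construction produces a cochain $(s_\sigma)$ and an element $d_0\in S_{i-1}$ with $\wp(s_\sigma)=\sigma(d_0)-d_0$; since $S_{i-1}$ is a domain and the extension $G/G_{(i)}$ of $G/G_{(i-1)}$ by $G_{(i-1)}/G_{(i)}$ is not split, Lemma~\ref{basic}(c) gives $d_0\notin\wp(S_{i-1})+R_{i-1}$, and as $\F_p\subseteq R_{i-1}$ this forces $d_0\notin\F_p$; we set $D_i:=d_0$. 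In both cases $D_i\in\F_p[Y_1,\dots,Y_{i-1}]$ and part~(a) holds. Now put $d':=D_i+X_i\in S'$. Because $X_i$ is fixed by $G/G_{(i-1)}$, the same cochain $(s_\sigma)$ satisfies $\wp(s_\sigma)=\sigma(d')-d'$ in $S'$, so $d'$ is an admissible output of the construction applied to $S'/R'$ and the extension $G/G_{(i)}$.

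By Lemma~\ref{basic}(a) applied to $S'/R'$ with this $d'$, the ring $S'[X]/(X^p-X-d')$ is a Galois extension of $R'=R_i$, its group of automorphisms restricts on $S'$ to the $G/G_{(i-1)}$-action, and there is an isomorphism onto $G/G_{(i)}$ compatible with restriction to $S'$ and with the projection $G/G_{(i)}\to G/G_{(i-1)}$. Writing $Y_i$ for $X$ and using $d'=D_i+X_i$, this ring is precisely $S_i=S_{i-1}[Y_i,X_i]/(Y_i^p-Y_i-D_i-X_i)$; we let $\lambda_i\colon\Gal(S_i/R_i)\to G/G_{(i)}$ be the isomorphism just obtained. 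Since $\pi_i$ is restriction from $S_i$ to $S_{i-1}$, which factors as restriction $S_i\to S'$ followed by restriction $S'\to S_{i-1}$, and the latter is the identity on $G/G_{(i-1)}$ under our identification, the square in~(c) commutes by the compatibility in Lemma~\ref{basic}(a). Finally, the defining relation of $S_i$ gives $X_i=Y_i^p-Y_i-D_i$, so $S_i=S_{i-1}[Y_i]\cong\F_p[Y_1,\dots,Y_i]$; this proves~(b) and completes the induction.

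The one genuinely delicate point is the replacement of $d_0$ by $d':=d_0+X_i$. The element $d_0$ already determines a $(G/G_{(i)})$-extension of $R_{i-1}$ restricting to $S_{i-1}$, but not the \emph{generic} one; adjoining $X_i$ and passing to $d_0+X_i$ is what produces the generic member of this family, in the sense of Lemma~\ref{basic}(b). One must check that this does not disturb the hypotheses of Lemma~\ref{basic}(a): since $X_i$ lies in $R'=(S')^{G/G_{(i-1)}}$, the relation $\wp(s_\sigma)=\sigma(d')-d'$ is unchanged, so $d'$ is still of the required form and Lemma~\ref{basic}(a) applies over $S'/R'$ verbatim. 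A lesser point is that all the cohomological choices should be made over $S_{i-1}/R_{i-1}$, where $S_{i-1}$ is a domain so that Lemma~\ref{basic}(c) is available, so as to ensure $D_i$ involves only $Y_1,\dots,Y_{i-1}$.
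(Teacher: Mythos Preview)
Your proof is correct and follows essentially the same approach as the paper's: an induction using the cochain construction and Lemma~\ref{basic} to produce $D_i$, Proposition~\ref{tensor} to adjoin the free variable $X_i$, and the $G/G_{(i-1)}$-invariance of $X_i$ to pass from $D_i$ to $D_i+X_i$. The only difference is cosmetic ordering---the paper first builds $S_{i-1}[Y_i]/(Y_i^p-Y_i-D_i)$ over $R_{i-1}$, then tensors up to $R_i$, and finally shifts by $X_i$, whereas you tensor first and then apply Lemma~\ref{basic}(a) directly with $d'=D_i+X_i$; your version is slightly more explicit about why $D_i=0$ in the split case and why $S_i\cong\F_p[Y_1,\dots,Y_i]$.
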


\begin{proof}
Let $1\le i\le n$ and assume that for $1\le j<i$ we have
constructed $D_j$, $S_j$, $\lambda_j$ satisfying the
conditions of the proposition.  By Lemma~\ref{basic}(a)
there is $D_i\in S_{i-1}$ such that
$S_{i-1}[Y_i]/(Y_i^p-Y_i-D_i)$ is Galois over $R_{i-1}$,
with Galois group $G/G_{(i)}$.  If $i\in\Sigma_G$ then
the extension $G/G_{(i)}$ of $G/G_{(i-1)}$ by
$G_{(i-1)}/G_{(i)}$ is split, so we may assume $D_i=0$.
On the other hand, if $i\not\in\Sigma_G$ then by
Lemma~\ref{basic}(c) we get $D_i\not\in\F_p$,
and by Lemma~\ref{basic}(a)
$S_{i-1}[Y_i]/(Y_i^p-Y_i-D_i)$ is Galois over $R_{i-1}$,
with Galois group $G/G_{(i)}$.  It then follows from
Proposition~\ref{tensor} that
$S_{i-1}[Y_i,X_i]/(Y_i^p-Y_i-D_i)$ is Galois over
$R_i=R_{i-1}[X_i]$, again with Galois group $G/G_{(i)}$.
Since $(\sigma-1)(D_i+X_i)=(\sigma-1)(D_i)$ for all
\[\sigma\in\Gal(S_{i-1}[X_i]/R_i)\cong
\Gal(S_{i-1}/R_{i-1}),\]
it follows from Lemma~\ref{basic}(a) that
\[S_i=S_{i-1}[Y_i,X_i]/(Y_i^p-Y_i-D_i-X_i)\]
is also Galois over $R_i$, with
$\Gal(S_i/R_i)\cong G/G_{(i)}$.
\end{proof}

     We now show that $S_n/R_n$ is a generic
$G$-extension, in the sense that if $F$ is a field of
characteristic $p$ such that $F/\wp(F)$ is sufficiently
large, then all $G$-extensions $E/F$ are
specializations of $S_n/R_n$.

\begin{theorem} \label{main}
Let $(G,\{G_{(i)}\})$ be a $p$-filtered group of order
$p^n$ and set $r=\rank(G)$.  For $1\le i\le n$ let
$D_i\in\F_p[Y_1,\ldots,Y_{i-1}]$ be polynomials
satisfying the conditions of Proposition~\ref{Di}.  Let
$F$ be a field of characteristic $p$ such that
$\dim_{\F_p}(F/\wp(F))\ge r$.
\begin{enumerate}[(a)]
\item Let $a_1,\ldots,a_n$ be elements of $F$ such that
$\{a_j+\wp(F):j\in\Sigma_G\}$ is an $\F_p$-linearly
independent subset of $F/\wp(F)$.  Define
$F_0,F_1,\ldots,F_n$ recursively by $F_0=F$ and
$F_i=F_{i-1}(\alpha_i)$ for $1\le i\le n$, where
$\alpha_i\in F^{sep}$ satisfies
$\alpha_i^p-\alpha_i=d_i+a_i$ with
\[d_i=D_i(\alpha_1,\ldots,\alpha_{i-1}).\]
Then for $0\le i\le n$, $F_i/F$ is a Galois field
extension and there is an isomorphism
$\mu_i:\Gal(F_i/F)\ra G/G_{(i)}$.  Furthermore, the
isomorphisms $\mu_i$ may be chosen so that for
$1\le i\le n$ the following diagram commutes:
\begin{equation} \label{diagram}
\setlength{\arraycolsep}{1pt}
\begin{array}{*{9}c}
\Gal(F_i/F)&\Lrightarrow&\Gal(F_{i-1}/F) \\[1mm]
\hspace*{-5mm}\mu_i\Ldownarrow
&&\hspace*{-8mm}\mu_{i-1}\Ldownarrow&& \\[4mm]
G/G_{(i)}&\Lrightarrow&G/G_{(i-1)}.
\end{array}
\end{equation}
\item Conversely, let
$F=F_0\subset F_1\subset\cdots\subset F_n$ be a tower of
Galois field extensions of $F$ such that there is an
isomorphism $\mu:\Gal(F_n/F)\ra G$ with
$\mu(\Gal(F_n/F_i))=G_{(i)}$ for $0\le i\le n$.  Then
there are $a_j\in F$ such that
$F_i=F(\alpha_1,\ldots,\alpha_i)$ for $0\le i\le n$,
where $\alpha_j$ are defined in terms of $a_j$ as in
(a).
\end{enumerate}
\end{theorem}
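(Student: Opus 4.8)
The plan is to deduce part~(a) by specializing the generic extension of Proposition~\ref{Di}, and to deduce part~(b) by an analogous induction that peels the given tower apart from the top using Lemma~\ref{basic}(b). For~(a), view $F$ as an $R_n$-algebra via $X_j\mapsto a_j$ and set $\mathcal{E}_i=F\otimes_{R_i}S_i$ for $0\le i\le n$. By Proposition~\ref{Di}(b) together with Proposition~\ref{tensor}, each $\mathcal{E}_i$ is a Galois extension of $F$ with group $\Gal(S_i/R_i)\cong G/G_{(i)}$, so $\dim_F\mathcal{E}_i=p^i$; and since $\alpha_j^p-\alpha_j=D_j(\alpha_1,\dots,\alpha_{j-1})+a_j$, sending $Y_j\mapsto\alpha_j$ defines an $F$-algebra surjection $\mathcal{E}_i\to F_i$. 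Everything in~(a) will follow once I show that $\mathcal{E}_i$ is a \emph{field}: then this surjection is an isomorphism on dimension grounds, so $F_i/F$ is Galois with $\Gal(F_i/F)\cong G/G_{(i)}$, and transporting the isomorphisms $\lambda_i$ of Proposition~\ref{Di}(c) along $\mathcal{E}_i\cong F_i$ yields the $\mu_i$ and the commutativity of \eqref{diagram}.

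I would prove that $\mathcal{E}_i$ is a field by induction on $i$, the case $i=0$ being clear. For the step, writing $S_i=(S_{i-1}\otimes_{R_{i-1}}R_i)[Y_i]/(Y_i^p-Y_i-D_i-X_i)$ and using $\mathcal{E}_{i-1}\cong F_{i-1}$ gives $\mathcal{E}_i\cong F_{i-1}[Y_i]/(Y_i^p-Y_i-d_i-a_i)$ with $d_i=D_i(\alpha_1,\dots,\alpha_{i-1})$, which is a field exactly when $d_i+a_i\notin\wp(F_{i-1})$. If $i\in\Sigma_G$ then $D_i=0$, so $d_i=0$ and I need $a_i\notin\wp(F_{i-1})$: by Artin--Schreier theory $(\wp(F_{i-1})\cap F)/\wp(F)\cong\Hom(\Gal(F_{i-1}/F),\F_p)$ has $\F_p$-dimension $\rank(G/G_{(i-1)})=|\Sigma_G^{i-1}|$ (Corollary~\ref{easy}), the classes $a_j+\wp(F)$ for $j\in\Sigma_G^{i-1}$ lie in it (because $a_j=\wp(\alpha_j)$) and are linearly independent, hence a basis, so the assumed independence of $\{a_j+\wp(F):j\in\Sigma_G^i\}$ forces $a_i\notin\wp(F_{i-1})$. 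If $i\notin\Sigma_G$ then $F_{i-1}$ is a field, hence has no nontrivial idempotents, and the extension $G/G_{(i)}$ of $G/G_{(i-1)}$ by $G_{(i-1)}/G_{(i)}$ is non-split; the specialization $S_{i-1}\to\mathcal{E}_{i-1}\cong F_{i-1}$ is Galois-equivariant, so it carries the parameter $D_i$ for $S_{i-1}/R_{i-1}$ to a parameter $d_i$ for $F_{i-1}/F$, and Lemma~\ref{basic}(c) gives $d_i\notin\wp(F_{i-1})+F$, whence $d_i+a_i\notin\wp(F_{i-1})$ since $a_i\in F$. This completes the induction and part~(a).

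For part~(b), I would induct on $i$, showing that one can choose $a_1,\dots,a_i\in F$ with $F_i=F(\alpha_1,\dots,\alpha_i)$ while keeping the Galois identification $\Gal(F_i/F)\cong G/G_{(i)}$ coming from~(a) equal to the one induced by the given $\mu$. At the step one has $F_{i-1}=F(\alpha_1,\dots,\alpha_{i-1})$ and a Galois-equivariant specialization $S_{i-1}\to F_{i-1}$, $Y_j\mapsto\alpha_j$; transporting the construction preceding Lemma~\ref{basic} shows that $d_i=D_i(\alpha_1,\dots,\alpha_{i-1})$ is a valid parameter $d$ for $F_{i-1}/F$ relative to $G/G_{(i)}$. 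Since $F_i/F$ is Galois with group $\cong G/G_{(i)}$ and $F_{i-1}$ is the fixed field of $G_{(i-1)}/G_{(i)}$, Lemma~\ref{basic}(b) yields $r\in F$ with $F_i\cong F_{i-1}[X]/(X^p-X-d_i-r)$, and putting $a_i=r$ gives $F_i=F_{i-1}(\alpha_i)=F(\alpha_1,\dots,\alpha_i)$; one then checks that the isomorphism furnished by Lemma~\ref{basic}(b) respects the group actions, so the inductive hypothesis on the identifications carries over.

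The step I expect to be the crux is the case $i\notin\Sigma_G$ of part~(a): one must guarantee that the new Artin--Schreier layer genuinely has degree $p$, and the mechanism is that non-splitness of the relevant group extension, combined with the inductively established facts that $F_{i-1}$ is a field and that the specialization is Galois-equivariant, allows Lemma~\ref{basic}(c) to be applied \emph{over $F_{i-1}$} rather than only over the generic base $R_{i-1}$. Part~(b) raises no new conceptual issue; the only care needed there is the bookkeeping to keep the Galois identifications compatible so that $D_i(\alpha_1,\dots,\alpha_{i-1})$ is recognized as the correct parameter in Lemma~\ref{basic}(b).
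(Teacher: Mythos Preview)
Your proposal is correct and takes essentially the same approach as the paper: induct on $i$, show $d_i+a_i\notin\wp(F_{i-1})$ (using Lemma~\ref{basic}(c) for $i\notin\Sigma_G$ and a rank argument via Corollary~\ref{easy} for $i\in\Sigma_G$), conclude that $F\otimes_{R_i}S_i\cong F_i$, and handle~(b) by the analogous induction with Lemma~\ref{basic}(b). The only difference is cosmetic: for the $i\in\Sigma_G$ step you argue via the duality $(\wp(F_{i-1})\cap F)/\wp(F)\cong\Hom(\Gal(F_{i-1}/F),\F_p)$, whereas the paper argues directly that $F_i$ contains an elementary abelian subextension of rank $|\Sigma_G^i|>\rank(\Gal(F_{i-1}/F))$.
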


\begin{proof}
(a) For $0\le i\le n$ define a ring
homomorphism $\psi_i:S_i\ra F_i$ by
$\psi_i(Y_j)=\alpha_j$ for $1\le j\le i$.  Then
$\psi_i(X_j)=a_i$ for $1\le j\le i$, so
$\psi_i(R_i)\subset F$.  Viewing $S_{i-1}$ as a subring
of $S_i$ we get the compatibility conditions
$\psi_i|_{S_{i-1}}=\psi_{i-1}$ for $1\le i\le n$.  We
use induction on $i$.  The base case $i=0$ is trivial.
Let $1\le i\le n$ and assume that the statement holds
for $i-1$.  We claim that $d_i+a_i\not\in\wp(F_{i-1})$.
By the inductive hypothesis $F_{i-1}/F$ is Galois, with
$\Gal(F_{i-1}/F)\cong G/G_{(i-1)}$.  If
$i\not\in\Sigma_G$ then $G/G_{(i)}$ is a nonsplit
extension of $G/G_{(i-1)}$ by $G_{(i-1)}/G_{(i)}$, so
the claim follows from Lemma~\ref{basic}(c).  Suppose
$i\in \Sigma_G$.  By Corollary~\ref{easy} we have
$\rank(\Gal(F_{i-1}/F))=|\Sigma_G^{i-1}|$.  Since
$\{a_j+\wp(F):j\in\Sigma_G^i\}$ is an $\F_p$-linearly
independent subset of $F/\wp(F)$, $F_i/F$ contains an
elementary abelian subextension of rank
$|\Sigma_G^i|=|\Sigma_G^{i-1}|+1$.  Hence
\[\rank(\Gal(F_i/F))>\rank(\Gal(F_{i-1}/F)).\]
It follows that $F_i\not=F_{i-1}$, so
$d_i+a_i=a_i\not\in\wp(F_{i-1})$.  In both cases we get
$[F_i:F_{i-1}]=p$, and hence $[F_i:F]=p^i$.  The map
$\psi_i:S_i\ra F_i$ induces an onto homomorphism
$F\otimes_{R_i}S_i\ra F_i$.  Since $S_i$ is a free
$R_i$-module of rank $p^i$, this map is an isomorphism.
Hence by Proposition~\ref{tensor} we see that $F_i/F$ is
a Galois extension, with
$\Gal(F_i/F)\cong\Gal(S_i/R_i)$.  Therefore by
Proposition~\ref{Di}(c) there is an isomorphism
$\mu_i:\Gal(F_i/F)\ra G/G_{(i)}$ which makes the diagram
(\ref{diagram}) commute.
\\[\smallskipamount]
(b) We use induction on $i$.  Note that for
$0\le i\le n$, $\mu$ induces an isomorphism
$\mu_i:\Gal(F_i/F)\ra G/G_{(i)}$.  Suppose we have
$a_1,\ldots,a_{i-1}\in F$ such that
$F_{i-1}=F(\alpha_1,\ldots,\alpha_{i-1})$.  Set
$d_i=D_i(\alpha_1,\ldots,\alpha_{i-1})$.  If
$i\in\Sigma_G$ then
$G/G_{(i)}\cong(G/G_{(i-1)})\times C_p$ and $d_i=0$.
Hence there is $a_i\in F$ such that
$F_i=F_{i-1}(\alpha_i)$, with
$\alpha_i^p-\alpha_i=a_i=d_i+a_i$.  Suppose
$i\not\in\Sigma_G$.  Then by Lemma~\ref{basic}(b) there
is $a_i\in F$ such that
$F_i\cong F_{i-1}[Y]/(Y^p-Y-d_i-a_i)$.  Hence
$F_i=F(\alpha_1,\ldots,\alpha_{i-1},\alpha_i)$, with
$\alpha_i$ a root of $Y^p-Y-d_i-a_i$.
\end{proof}

\begin{remark}
Saltman \cite[pg.~308]{salt} states that his results
``can be viewed as a generalization of the theory of
Witt vectors''.  In particular, he proves the existence
of polynomials $D_i$ which satisfy the conditions of
Proposition~\ref{Di} and Theorem~\ref{main}.  These
polynomials depend only on the $p$-filtered group $G$,
and not on the base field $F$.  In the case where $G$ is
a cyclic $p$-group one can use Witt addition polynomials
to produce $D_i$ satisfying Saltman's conditions.
\end{remark}

\section{Ramification breaks in $G$-extensions}

Let $K$ be a local field of characteristic $p$ with
perfect residue field and let $(G,\{G_{(i)}\})$ be a
$p$-filtered group of order $p^n$.  Let
$u_1<u_2<\cdots<u_n$ be positive integers such that
$p\nmid u_i$ for $1\le i\le n$.  We wish to show that if
this sequence grows quickly enough then there is a
totally ramified $G$-extension $L/K$ such that every
ramification subgroup of $\Gal(L/K)$ is equal to
$G_{(i)}$ for some $i$ and $u_1,u_2,\ldots,u_n$ are the
upper ramification breaks of $L/K$.

     We begin by recalling some basic facts about higher
ramification theory; see Chapter~IV of \cite{cl} for
more information on this topic.  Let $K$ be a local
field and let $L/K$ be a Galois extension.  Set
$G=\Gal(L/K)$ and let $G_0$ be the inertia subgroup of
$G$.  Let $\pi_L$ be a uniformizer for $L$.
We define the ramification number of $\sigma\in G$
to be $i(\sigma)=v_L(\sigma(\pi_L)-\pi_L)-1$ if
$\sigma\in G_0$, and $i(\sigma)=-1$ if
$\sigma\not\in G_0$.  (Beware that $i_G(\sigma)$ from
\cite{cl} is is not the same as $i(\sigma)$: instead we
have $i_G(\sigma)=i(\sigma)+1$.) Then
$i(\id_L)=+\infty$, and $i(\sigma)$ is a nonnegative
integer for $\sigma\in G_0\smallsetminus\{\id_L\}$.  For
$t\in\R$ with $t\ge-1$ define the $t$th lower
ramification subgroup of $G$ to be
$G_t=\{\sigma\in G:i(\sigma)\ge t\}$.  Say $b\ge-1$ is a
lower ramification break of $L/K$ if
$G_b\not=G_{b+\epsilon}$ for all real $\epsilon>0$.
Thus $b$ is a lower ramification break of $L/K$ if and
only if $b=i(\sigma)$ for some $\sigma\in G$ with
$\sigma\not=\id_L$.

     We define the Hasse-Herbrand function
$\phi_{L/K}:[-1,\infty)\ra[-1,\infty)$ by
\[\phi_{L/K}(x)=\int_0^x\frac{dt}{|G_0:G_t|}.\]
Then $\phi_{L/K}$ is continuous on $[-1,\infty)$ and
differentiable on $(-1,\infty)$ except at the lower
ramification breaks.  Since $\phi_{L/K}$ is one-to-one
and onto it has an inverse
$\psi_{L/K}:[-1,\infty)\ra[-1,\infty)$.  Define the
upper ramification subgroups of $G$ by setting
$G^x=G_{\psi_{L/K}(x)}$ for $x\ge-1$.  Say that $u\ge-1$
is an upper ramification break of $L/K$ if
$G^u\not=G^{u+\epsilon}$ for all $\epsilon>0$.  Then
$\psi_{L/K}$ is differentiable except at the upper
ramification breaks of $L/K$, and $u$ is an upper
ramification break of $L/K$ if and only if
$\psi_{L/K}(u)$ is a lower ramification break.  Let
$M/K$ be a Galois subextension of $L/K$ and set
$H=\Gal(L/M)$.  Then by Herbrand's theorem
\cite[IV\,\S3]{cl} we get
$\phi_{L/K}=\phi_{M/K}\circ\phi_{L/M}$ and
$\psi_{L/K}=\psi_{L/M}\circ\psi_{M/K}$.  Furthermore, we
have $(G/H)^x=G^xH/H$ for all $x\ge-1$.  It follows that
if $u$ is an upper break of $M/K$ then $u$ is also an
upper break of $L/K$.

\begin{lemma} \label{breakshift}
Let $L/K$ be a finite Galois extension and let $E/K$ be
a ramified $C_p$-extension such that $E\not\subset L$.
Assume that the unique (upper and lower) ramification
break $v$ of $E/K$ is not an upper ramification break of
$L/K$.  Then the ramification break of the
$C_p$-extension $LE/L$ is $\psi_{L/K}(v)$.
\end{lemma}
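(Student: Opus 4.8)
The plan is to work inside the compositum $M=LE$ and analyze $G=\Gal(M/K)$ together with its two normal subgroups $H=\Gal(M/L)$ and $H'=\Gal(M/E)$. Since $[E:K]=p$ is prime and $E\not\subseteq L$, we have $L\cap E=K$, hence $[M:L]=p$, $H\cong\Gal(E/K)\cong C_p$, $H'\cong\Gal(L/K)$, $H\cap H'=1$, and (comparing orders, $[M:L][M:E]=[M:K]$) $HH'=G$. Let $\beta$ be the unique ramification break of the $C_p$-extension $M/L$ (its upper and lower breaks coincide). The first step is to express $\beta$ in terms of the ramification data of $M/K$: since lower numbering is compatible with subgroups, $\Gal(M/L)_t=H\cap G_t$, so $\beta$ is the largest $t$ with $H\subseteq G_t$; rewriting via $G_t=G^{\phi_{M/K}(t)}$, this gives $\beta=\psi_{M/K}(s_0)$, where $s_0$ is the largest $s$ with $H\subseteq G^s$. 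Everything then reduces to proving $s_0=v$.

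The inequality $s_0\le v$ is easy: if $H\subseteq G^s$ then $G=HH'\subseteq G^sH'$, so $(G/H')^s=G^sH'/H'=G/H'$, and since $G/H'\cong\Gal(E/K)$ has unique break $v$ this forces $s\le v$. The reverse inclusion $H\subseteq G^v$ is the crux, and I expect it to be the main obstacle. Fix a small $\epsilon>0$ and assemble three facts: (i) $G^vH'=G$ and (ii) $G^{v+\epsilon}\subseteq H'$, both from $v$ being the ramification break of $G/H'\cong\Gal(E/K)$; and (iii) $G^vH=G^{v+\epsilon}H$, which is precisely where the hypothesis that $v$ is not an upper break of $L/K\cong G/H$ enters, giving $(G/H)^v=(G/H)^{v+\epsilon}$. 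Now count orders: (i) gives $|G^v\cap H'|=|G^v|/[G:H']=|G^v|/p$; (ii) together with $G^{v+\epsilon}\subseteq G^v$ gives $|G^{v+\epsilon}|\le|G^v\cap H'|=|G^v|/p$; and (iii), using $G^{v+\epsilon}\cap H\subseteq H'\cap H=1$, gives $|G^v|/|G^v\cap H|=|G^{v+\epsilon}|$. Chaining these yields $|G^v\cap H|\ge p=|H|$, hence $H\subseteq G^v$, so $s_0=v$. The delicate points are that a single $\epsilon$ can be chosen to make (ii) and (iii) hold simultaneously, and that the identities $\Gal(M/L)_t=H\cap G_t$, $(G/H')^x=G^xH'/H'$, and $(G/H)^x=G^xH/H$ are each applied with the correct normal subgroup.

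Finally, from $s_0=v$ we obtain $\beta=\psi_{M/K}(v)$. Applying Herbrand's theorem to the tower $K\subset L\subset M$ gives $\psi_{M/K}=\psi_{M/L}\circ\psi_{L/K}$, so $\beta=\psi_{M/L}(w)$ with $w:=\psi_{L/K}(v)$. Since the break $v$ of the ramified $C_p$-extension $E/K$ is a positive integer and $\psi_{L/K}(x)\ge x$ for $x\ge 0$ (because $\phi_{L/K}(x)\le x$), we have $w\ge v\ge 1>0$. For the $C_p$-extension $M/L$ one has $\psi_{M/L}(x)=x$ for $x\le\beta$ and $\psi_{M/L}(x)>x$ for $x>\beta$; since $\beta=\psi_{M/L}(w)\ge w$ we must have $w\le\beta$, whence $\psi_{M/L}(w)=w$ and therefore $\beta=w=\psi_{L/K}(v)$. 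In particular $\beta=\psi_{L/K}(v)\ge 1$, so $M/L$ really is ramified, as the statement implicitly asserts.
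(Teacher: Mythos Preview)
Your proof is correct, but it follows a substantially different route from the paper's. The paper gives a two-sentence argument based on differentiability of the Hasse--Herbrand functions: writing $\psi_{LE/K}$ in two ways as $\psi_{LE/E}\circ\psi_{E/K}=\psi_{LE/L}\circ\psi_{L/K}$, the left side fails to be differentiable at $v$ (since $\psi_{E/K}$ has its unique corner there, and composing with the convex piecewise-linear $\psi_{LE/E}$ cannot smooth it out), while $\psi_{L/K}$ \emph{is} differentiable at $v$ by hypothesis; hence $\psi_{LE/L}$ must fail to be differentiable at $\psi_{L/K}(v)$, which is therefore its unique upper break. Your argument instead works directly with the ramification filtrations of $G=\Gal(LE/K)$ and locates the break via an order count on $H$, $H'$, $G^v$, and $G^{v+\epsilon}$. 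This is longer but more hands-on: it avoids the implicit convexity/chain-rule reasoning and makes the role of each hypothesis explicit (your fact~(iii) is precisely where ``$v$ is not an upper break of $L/K$'' enters). One small point of presentation: you invoke the description of $\psi_{M/L}$ as having a single corner at $\beta$ before checking that $M/L$ is ramified. It is cleaner to note, right after establishing $H\subseteq G^v$ with $v\ge1$, that this gives $H\subseteq G_{\psi_{M/K}(v)}\subseteq G_0$, so $M/L$ is totally ramified and your formula for $\psi_{M/L}$ is justified from the start.
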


\begin{proof}
Since $\psi_{LE/E}\circ\psi_{E/K}
=\psi_{LE/L}\circ\psi_{L/K}$ is not differentiable at
$v$, but $\psi_{L/K}$ is differentiable at $v$, we
deduce that $\psi_{LE/L}$ is not differentiable at
$\psi_{L/K}(v)$.  Hence $\psi_{L/K}(v)$ is the unique
upper break of $LE/L$.
\end{proof}

     We will mainly be considering totally ramified
Galois extensions $L/K$ degree $p^n$ with the property
that for every lower ramification break $b$ we have
$|G_b:G_{b+\epsilon}|=p$.  In this case there are $n$
lower breaks $b_1<b_2<\dots<b_n$ and $n$ upper breaks
$u_1<u_2<\dots<u_n$.  The breaks are related by the
formulas $u_1=b_1$ and $u_{i+1}-u_i=p^{-i}(b_{i+1}-b_i)$
for $1\le i\le n-1$.  As a result we get the following
inequalities:

\begin{lemma} \label{known}
Let $1\le i\le j\le n$.  Then
\begin{enumerate}[(a)]
\item $b_j-b_i\le p^{j-1}(u_j-u_i)$.
\item $b_j\le p^{j-1}u_j<p^ju_j$.
\end{enumerate}
\end{lemma}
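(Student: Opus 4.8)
The plan is to read off both inequalities directly from the two break relations recorded just above the statement, namely $u_1 = b_1$ and $u_{i+1} - u_i = p^{-i}(b_{i+1} - b_i)$ for $1 \le i \le n-1$. Rewriting the latter as $b_{k+1} - b_k = p^k(u_{k+1} - u_k)$ expresses the spacing of the lower breaks as a $p$-power weighting of the spacing of the upper breaks, and since the upper breaks are strictly increasing, all the gaps $u_{k+1} - u_k$ are positive. Part (a) is then a telescoping-plus-monotonicity argument, and part (b) follows by specializing (a).

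For part (a), I would take $1 \le i \le j \le n$ and telescope:
\[
b_j - b_i = \sum_{k=i}^{j-1} (b_{k+1} - b_k) = \sum_{k=i}^{j-1} p^k(u_{k+1} - u_k).
\]
Because $u_{k+1} - u_k > 0$ and $p^k \le p^{j-1}$ for every $k$ in the range $i \le k \le j-1$, replacing each coefficient $p^k$ by $p^{j-1}$ only increases the sum, giving
\[
b_j - b_i \le p^{j-1} \sum_{k=i}^{j-1} (u_{k+1} - u_k) = p^{j-1}(u_j - u_i).
\]
(When $i = j$ both sides vanish, so the bound is trivial there.)

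For part (b), I would apply (a) with $i = 1$ and substitute $b_1 = u_1$:
\[
b_j \le b_1 + p^{j-1}(u_j - u_1) = p^{j-1}u_j - (p^{j-1} - 1)u_1.
\]
Since $L/K$ is totally, hence wildly, ramified of $p$-power degree, its first lower break satisfies $b_1 = u_1 \ge 1$; as $p^{j-1} - 1 \ge 0$ this gives $b_j \le p^{j-1}u_j$, and then $u_j > 0$ yields $p^{j-1}u_j < p^j u_j$. I do not expect any genuine obstacle in this argument; the only point deserving a word of care is the positivity $u_1 > 0$ invoked in (b), which is precisely the assertion that the extension is wildly ramified.
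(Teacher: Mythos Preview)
Your proof is correct and follows essentially the same approach as the paper: both telescope $b_j-b_i=\sum_{k=i}^{j-1}p^k(u_{k+1}-u_k)$ and bound using monotonicity of the $u_k$, then specialize to $i=1$ for part (b). The only cosmetic difference is that the paper applies Abel summation before bounding, whereas you bound the coefficients $p^k\le p^{j-1}$ directly; your version is, if anything, slightly more direct.
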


\begin{proof}
(a) If $i=j$ the claim is clear.  If $i<j$ then
\begin{align*}
b_j-b_i&=\sum_{h=i}^{j-1}\,(b_{h+1}-b_h) \\
&=\sum_{h=i}^{j-1}p^h(u_{h+1}-u_h) \\
&=p^{j-1}u_j-p^iu_i
+\sum_{h=i+1}^{j-1}(p^{h-1}-p^h)u_h \\
&\le p^{j-1}u_j-p^iu_i
+\sum_{h=i+1}^{j-1}(p^{h-1}-p^h)u_i \\
&=p^{j-1}(u_j-u_i).
\end{align*}
(b) This follows from (a) by letting $i=1$.
\end{proof}

     The following well-known fact will often be used
without comment (cf.\ Proposition~2.5 in
\cite[III]{FV}).

\begin{lemma} \label{ASram}
Let $K$ be a local field of characteristic $p$ and let
$L/K$ be a ramified $C_p$-extension.  Let $s\in K$ be
such that $L$ is generated over $K$ by a root $\alpha$
of $X^p-X-s$.  Then the following hold:
\begin{enumerate}[(a)]
\item The ramification break $b$ of $L/K$ satisfies
$b\le-v_K(s)$, with equality if $p\nmid v_K(s)$.
\item If $b<-v_K(s)$ then there is $t\in K$ such that
that $v_K(s-\wp(t))=-b$.
\end{enumerate}
\end{lemma}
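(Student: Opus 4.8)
The plan is to use the shape of the Artin--Schreier polynomial together with standard facts about the lower ramification filtration. Since $L/K$ is a ramified extension of prime degree $p$ it is totally ramified, with $e(L/K)=p$, so $v_L=p\,v_K$ on $K$; moreover $X^p-X-s$ must be the minimal polynomial of $\alpha$, hence irreducible over $K$. I would first record a few preliminary observations. We have $v_K(s)<0$: otherwise $s\in\OO_K$, and since the derivative of $X^p-X-s$ is the unit $-1$ the ring $\OO_K[X]/(X^p-X-s)$ is \'etale over $\OO_K$, which together with irreducibility forces $L/K$ to be unramified, a contradiction. Writing $m=-v_K(s)$, so $v_L(s)=-pm$, the relation $\alpha^p-\alpha=s$ forces $v_L(\alpha)<0$ (else the left-hand side would lie in $\OO_L$), whence $v_L(\alpha^p)=p\,v_L(\alpha)<v_L(\alpha)$ and $v_L(\alpha)=-m$. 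Finally, for the nontrivial $\sigma\in\Gal(L/K)$, $\alpha$ and $\sigma(\alpha)$ are roots of the same polynomial, so $\sigma(\alpha)-\alpha=c$ for some $c\in\F_p^{\times}$ and $v_L(\sigma(\alpha)-\alpha)=0$.

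For part (a) the key tool is the following estimate for a totally ramified extension with ramification break $b$: for every $x\in L^{\times}$ one has $v_L(\sigma(x)-x)\ge v_L(x)+b$, with equality when $p\nmid v_L(x)$. I would prove this by writing $x=\pi_L^{n}w$ with $w\in\OO_L^{\times}$ and $\sigma(\pi_L)=u_\sigma\pi_L$, where $v_L(u_\sigma-1)=b$ (recall every nontrivial $\sigma$ has $i(\sigma)=b$ in a $C_p$-extension), so that $\sigma(x)/x-1=u_\sigma^{\,n}\bigl(\sigma(w)/w-1\bigr)+\bigl(u_\sigma^{\,n}-1\bigr)$. One has $v_L(\sigma(w)/w-1)\ge b+1$ because $v_L(\sigma(y)-y)\ge b+1$ for all $y\in\OO_L$ (valid since $\OO_L=\OO_K[\pi_L]$ in the totally ramified case), while writing $n=p^{a}n'$ with $p\nmid n'$ and using that $y\mapsto y^{p}$ is the Frobenius gives $v_L(u_\sigma^{\,n}-1)=v_L\bigl((u_\sigma^{\,n'})^{p^{a}}-1\bigr)=p^{a}b$; comparing these valuations yields the estimate. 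Applying it with $x=\alpha$, $v_L(\alpha)=-m$, gives $0=v_L(\sigma(\alpha)-\alpha)\ge -m+b$, i.e.\ $b\le m=-v_K(s)$, with equality whenever $p\nmid m$.

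For part (b), assume $b<m$; then $p\mid m$, since otherwise part (a) would force $b=m$. The point is that replacing $s$ by $s-\wp(t)$ for $t\in K$ does not change $L$ (if $\alpha^p-\alpha=s$ then $(\alpha-t)^p-(\alpha-t)=s-\wp(t)$), and that whenever $p\mid v_K(s)<0$ one can strictly raise the valuation of the representative: writing $m=pm_1$ and using that the residue field $\kappa$ is perfect, choose $d\in\OO_K^{\times}$ whose image in $\kappa$ is the $p$-th root of the image of $s\pi_K^{m}$, and set $t=d\pi_K^{-m_1}$; then in $s-\wp(t)=\bigl(s-d^{p}\pi_K^{-m}\bigr)+d\pi_K^{-m_1}$ both summands have valuation $>-m$ (the first by the choice of $d$, the second since $m_1<m$), so $v_K(s-\wp(t))>-m$. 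The new representative still has negative valuation, since one in $\OO_K$ would make $L/K$ unramified. Iterating (using additivity of $\wp$ in characteristic $p$ to keep track of the accumulated $t\in K$), and using that a strictly increasing sequence of negative integers terminates, we reach $s'=s-\wp(t)$ with $v_K(s')<0$ and $p\nmid v_K(s')$; part (a) applied to $s'$ then gives $-v_K(s')=b$, i.e.\ $v_K(s-\wp(t))=-b$.

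I expect the main obstacle to be part (a): extracting the \emph{exact} value of $b$ (not merely the bound $b\le -v_K(s)$) in the case $p\nmid v_K(s)$, which is exactly where the $p$-divisibility bookkeeping in the displayed estimate is needed. After that, part (b) is a routine descent; the only subtlety is to note that each intermediate representative still satisfies the hypotheses of part (a), which is clear because it defines the same field $L/K$.
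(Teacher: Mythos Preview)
Your argument is correct. Note, however, that the paper does not actually give its own proof of this lemma: it is stated as a ``well-known fact'' with a reference to Proposition~2.5 in Chapter~III of Fesenko--Vostokov, so there is nothing to compare your approach against. Your direct proof is sound: the key estimate $v_L(\sigma(x)-x)\ge v_L(x)+b$ with equality when $p\nmid v_L(x)$ is derived correctly from the decomposition $\sigma(x)/x-1=u_\sigma^{\,n}(\sigma(w)/w-1)+(u_\sigma^{\,n}-1)$ together with the characteristic-$p$ identity $(u_\sigma^{n'})^{p^a}-1=(u_\sigma^{n'}-1)^{p^a}$, and the descent in part~(b) terminates because a strictly increasing sequence of negative integers is finite and a representative with nonnegative valuation would make $L/K$ unramified.
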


     Let $G$ be a $p$-filtered group of order $p^n$, let
$a_1,\dots,a_n$ be elements of $K$ which satisfy the
hypotheses of Theorem~\ref{main}(a), and let $K_n/K$ be
the associated $G$-extension.  In some cases we can
compute the ramification data of $K_n/K$ in terms of the
valuations $v_K(a_1),\dots,v_K(a_n)$:

\begin{theorem} \label{ramfilt}
Let $(G,\{G_{(i)}\})$ be a $p$-filtered group
of order $p^n$, and for $1\le i\le n$ let
$D_i\in\F_p[Y_1,\ldots,Y_{i-1}]$ be the polynomials
constructed in Proposition~\ref{Di}.  Let $K$ be a local
field of characteristic $p$ with perfect residue
field.  Let $u_1<u_2<\cdots<u_n$ be positive integers
such that $p\nmid u_i$, and let $a_1,\ldots,a_n$ be
elements of $K$ such that $v_K(a_i)=-u_i$ for
$1\le i\le n$.  As in Theorem~\ref{main} we define
$K_0,K_1,\ldots,K_n$ recursively by $K_0=K$ and
$K_i=K_{i-1}(\alpha_i)$ for $1\le i\le n$, where
$\alpha_i$ satisfies $\alpha_i^p-\alpha_i=d_i+a_i$ with
$d_i=D_i(\alpha_1,\ldots,\alpha_{i-1})$.  Define
$b_1<b_2<\cdots<b_n$ recursively by $b_1=u_1$ and
$b_{i+1}-b_i=p^i(u_{i+1}-u_i)$ for $1\le i\le n-1$.  If
$b_i>-p^{i-1}v_K(d_i)$ for all $i\not\in\Sigma_G$ then
\begin{enumerate}[(a)]
\item $K_n/K$ is Galois, and there is an isomorphism
$\mu:\Gal(K_n/K)\ra G$ such that
$\mu(\Gal(K_n/K_i))=G_{(i)}$ for $0\le i\le n$.
\item $K_n/K$ has upper ramification breaks
$u_1,u_2,\ldots,u_n$ and lower ramification breaks
$b_1,b_2,\ldots,b_n$.  Furthermore, we have
$v_K(\alpha_i)=-p^{-1}u_i$ for $0\le i\le n$.
\item The ramification subgroups of $\Gal(K_n/K)$ are
the subgroups of the form $\Gal(K_n/K_i)$ for
$0\le i\le n$.
\end{enumerate}
\end{theorem}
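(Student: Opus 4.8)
The plan is to obtain (a) directly from Theorem~\ref{main}(a), and then prove (b) and (c) simultaneously by induction on $i$, the inductive statement being that $K_i/K$ is totally ramified with lower ramification breaks $b_1<\cdots<b_i$, each of index $p$, with upper breaks $u_1<\cdots<u_i$, and with $v_K(\alpha_j)=-u_j/p$ for $1\le j\le i$. First I would check the hypotheses of Theorem~\ref{main}(a). If $\sum_{j\in\Sigma_G}c_ja_j\in\wp(K)$ with the $c_j\in\F_p$ not all zero and $j^\ast=\max\{j:c_j\ne 0\}$, then the left side has $v_K$-valuation $-u_{j^\ast}$, which is negative and prime to $p$, while every element of $\wp(K)$ has valuation that is either $\ge 0$ or divisible by $p$; hence $\{a_j+\wp(K):j\in\Sigma_G\}$ is $\F_p$-linearly independent, and since $|\Sigma_G|=\rank(G)=r$ by Corollary~\ref{easy} this forces $\dim_{\F_p}(K/\wp(K))\ge r$. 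Thus Theorem~\ref{main}(a) applies: it yields (a) (set $\mu=\mu_n\colon\Gal(K_n/K)\to G/G_{(n)}=G$ and chase the diagrams (\ref{diagram}) to get $\mu(\Gal(K_n/K_i))=G_{(i)}$), and also the facts $[K_i:K]=p^i$ and $\Gal(K_i/K)\cong G/G_{(i)}$, so that each $K_i/K_{i-1}$ has degree $p$.

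The base case $i=1$ is immediate: $D_1\in\F_p$ forces $1\in\Sigma_G$, so $D_1=0$ and $\alpha_1^p-\alpha_1=a_1$ with $v_K(a_1)=-u_1$, $p\nmid u_1$; Lemma~\ref{ASram}(a) gives break $u_1=b_1$ (so $K_1/K$ is totally ramified), and comparing valuations gives $v_K(\alpha_1)=-u_1/p$. For the inductive step fix $i\ge 2$ and work in $K_{i-1}$, where $v_{K_{i-1}}=p^{i-1}v_K$. If $i\in\Sigma_G$ then $d_i=0$; if $i\notin\Sigma_G$ the hypothesis $b_i>-p^{i-1}v_K(d_i)$ says precisely $v_{K_{i-1}}(d_i)>-b_i$. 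In either case, since $b_i\le p^{i-1}u_i=-v_{K_{i-1}}(a_i)$ by Lemma~\ref{known}(b), we get $v_{K_{i-1}}(d_i)>v_{K_{i-1}}(a_i)$, hence $v_{K_{i-1}}(d_i+a_i)=-p^{i-1}u_i$. Because $\alpha_i^p-\alpha_i=d_i+a_i$ has a pole, $v_{K_i}(\alpha_i)<0$, so $v_{K_i}(\alpha_i^p)$ dominates and $p\,v_{K_i}(\alpha_i)=v_{K_i}(d_i+a_i)=-p^iu_i$, i.e.\ $v_{K_i}(\alpha_i)=-p^{i-1}u_i$ and $v_K(\alpha_i)=-u_i/p$.

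Next I would pin down the break of the $C_p$-extension $K_i/K_{i-1}$. Let $\theta$ be a root of $X^p-X-a_i$ and $E=K(\theta)$; since $p\nmid u_i$, $E/K$ is a ramified $C_p$-extension with break $u_i$ (Lemma~\ref{ASram}(a)). By the inductive hypothesis the upper breaks of $K_{i-1}/K$ are $u_1<\cdots<u_{i-1}<u_i$, so $u_i$ is not an upper break of $K_{i-1}/K$; the same fact together with $(G/H)^x=G^xH/H$ forces $E\not\subseteq K_{i-1}$ (otherwise $u_i$ would be an upper break of $K_{i-1}/K$). Hence Lemma~\ref{breakshift} applies and the break of $K_{i-1}(\theta)/K_{i-1}$ equals $\psi_{K_{i-1}/K}(u_i)=b_{i-1}+p^{i-1}(u_i-u_{i-1})=b_i$, using that $\psi_{K_{i-1}/K}$ is affine of slope $p^{i-1}$ on $[u_{i-1},\infty)$ with $\psi_{K_{i-1}/K}(u_{i-1})=b_{i-1}$. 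Since $b_i<p^{i-1}u_i=-v_{K_{i-1}}(a_i)$ for $i\ge 2$ (a strict form of Lemma~\ref{known}(b), immediate from the definition of the $b_j$), Lemma~\ref{ASram}(b) gives $t\in K_{i-1}$ with $v_{K_{i-1}}(a_i-\wp(t))=-b_i$; combined with $v_{K_{i-1}}(d_i)>-b_i$ this yields $v_{K_{i-1}}(d_i+a_i-\wp(t))=-b_i$. As $K_i=K_{i-1}(\alpha_i-t)$ with $(\alpha_i-t)^p-(\alpha_i-t)=d_i+a_i-\wp(t)$, and $p\nmid b_i$ (the recursion gives $b_j\equiv u_1\pmod p$ for all $j$), Lemma~\ref{ASram}(a) shows $K_i/K_{i-1}$ is totally ramified with break exactly $b_i$.

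Finally I would feed this into Herbrand's relation $\phi_{K_i/K}=\phi_{K_{i-1}/K}\circ\phi_{K_i/K_{i-1}}$: the inner function is the identity on $[-1,b_i]$ and has slope $1/p$ beyond $b_i$, while $\phi_{K_{i-1}/K}$ drops slope by the factor $p$ at each of $b_1<\cdots<b_{i-1}$ and has slope $1/p^{i-1}$ on $[b_{i-1},\infty)$, so (since $b_1<\cdots<b_{i-1}<b_i$) the composite $\phi_{K_i/K}$ drops slope by the factor $p$ at exactly $b_1<\cdots<b_i$. Thus $K_i/K$ is totally ramified with lower breaks $b_1<\cdots<b_i$, each of index $p$, and the conversion $u_1=b_1$, $u_{j+1}-u_j=p^{-j}(b_{j+1}-b_j)$ recovers the upper breaks $u_1<\cdots<u_i$; this completes the induction and proves (b). For (c), by (b) the integer $u_m$ is the largest upper break of $K_m/K$, so for small $\epsilon>0$ the identity $(G/H)^x=G^xH/H$ gives $\Gal(K_n/K)^{u_m+\epsilon}\,\Gal(K_n/K_m)/\Gal(K_n/K_m)=\Gal(K_m/K)^{u_m+\epsilon}=\{1\}$, whence $\Gal(K_n/K)^{u_m+\epsilon}\subseteq\Gal(K_n/K_m)$; both groups have order $p^{n-m}$ by (b), so they are equal, and as $m$ and $\epsilon$ vary these exhaust the ramification subgroups of $\Gal(K_n/K)$, giving (c). The main obstacle is the third paragraph: the naive value $v_{K_{i-1}}(d_i+a_i)=-p^{i-1}u_i$ is divisible by $p$ when $i\ge 2$ and so does not determine the break by itself; the key device is to locate the $\wp(K_{i-1})$-reduction of $a_i\in K$ via Lemma~\ref{breakshift} and then absorb the (hypothesis-controlled) term $d_i$.
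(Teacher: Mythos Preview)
Your proof is correct and follows essentially the same approach as the paper: verify the hypotheses of Theorem~\ref{main}(a) to obtain (a), then prove (b) by induction on $i$, computing the ramification break of $K_i/K_{i-1}$ via Lemma~\ref{breakshift} applied to the auxiliary $C_p$-extension of $K$ generated by a root of $X^p-X-a_i$, together with Lemma~\ref{ASram}(b), and finally derive (c) from (b) by the order count $|\Gal(K_n/K)^{u_m+\epsilon}|=p^{n-m}=|\Gal(K_n/K_m)|$.

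Your argument is slightly more streamlined than the paper's in the case $i\notin\Sigma_G$: the paper decomposes $\alpha_i=\alpha_i'+\alpha_i''$ with $\wp(\alpha_i')=d_i$, $\wp(\alpha_i'')=a_i$, analyzes both $K_{i-1}(\alpha_i')/K_{i-1}$ and $K_{i-1}(\alpha_i'')/K_{i-1}$ separately (invoking Theorem~\ref{main}(a) again to see that $[K_{i-1}(\alpha_i'):K_{i-1}]=p$, and Lemma~\ref{ASram}(b) twice), and then combines. You instead apply Lemma~\ref{ASram}(b) only once to reduce $a_i$, and absorb $d_i$ directly using the hypothesis $v_{K_{i-1}}(d_i)>-b_i$; this avoids the auxiliary extension $K_{i-1}(\alpha_i')$ entirely and lets you handle the cases $i\in\Sigma_G$ and $i\notin\Sigma_G$ uniformly.
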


\begin{proof}
Since the elements of $\{u_i:i\in\Sigma_G\}$ are
distinct and relatively prime to $p$, the set
$\{a_i+\wp(K):i\in\Sigma_G\}$ is linearly independent
over $\F_p$.  Therefore by Theorem~\ref{main}(a) the
extensions $K_0\subset K_1\subset\cdots\subset K_n$
associated to $a_1,\ldots,a_n$ satisfy condition (a).
We use induction on $i$ to show that
$v_K(\alpha_i)=-p^{-1}u_i$ and $K_i/K$ has upper
ramification breaks $u_1,\dots,u_i$.  It then follows
that $\lvert G^{u_i}\rvert =p^{n-i+1}$, $\lvert
G^{u_i+\epsilon}\rvert=p^{n-i}$, and $K_i/K$ has lower
ramification breaks $b_1,\dots,b_i$.  In addition, since
$\Gal(K_i/K)\cong G/G_{(i)}$ we get
\[G_{(i)}/G_{(i)}=(G/G_{(i)})^{u_i+\epsilon}
=G^{u_i+\epsilon}G_{(i)}/G_{(i)},\]
and hence $G^{u_i+\epsilon}\le G_{(i)}$.  Since
$|G^{u_i+\epsilon}|=|G_{(i)}|=p^{n-i}$ it follows that
$G_{(i)}=G^{u_i+\epsilon}$ is a ramification subgroup of
$G\cong\Gal(K_n/K)$ for $1\le i\le n$.  Therefore the
$n+1$ distinct ramification subgroups of $\Gal(K_n/K)$
are precisely the subgroups $\Gal(K_n/K_i)$ for
$0\le i\le n$.

     We have $D_1=0$, so the upper ramification break of
$K_1/K$ is $-v_K(a_1)=u_1$, and
$v_K(\alpha_1)=p^{-1}v_K(a_1)=-p^{-1}u_1$.  Let
$2\le i\le n$ and assume the claim holds for $i-1$.  If
$i\in\Sigma_G$ then $D_i=0$ and $K(\alpha_i)/K$ is a
$C_p$-extension with upper ramification break $u_i$.
Since $K_i\supset K(\alpha_i)$ it follows that $u_i$ is
an upper ramification break of $K_i/K$.  Hence by
induction $K_i/K$ has upper ramification breaks
$u_1,\ldots,u_{i-1},u_i$.  We also get
$v_K(\alpha_i)=p^{-1}v_K(a_i)=-p^{-1}u_i$.

     Suppose $i\not\in\Sigma_G$, and set
$d_i=D_i(\alpha_1,\dots,\alpha_{i-1})$ as in
Theorem~\ref{main}(a).  By the lower bound on $b_i$ and
Lemma~\ref{known}(b) we get
\[v_K(d_i)>-p^{1-i}b_i\ge-u_i=v_K(a_i).\]
It follows that $v_K(d_i+a_i)=v_K(a_i)=-u_i$, and hence
that $v_K(\alpha_i)=-p^{-1}u_i$.  Since
$\wp(\alpha_i)=d_i+a_i$ we can write
$\alpha_i=\alpha_i'+\alpha_i''$, with
$\wp(\alpha_i')=d_i$ and $\wp(\alpha_i'')=a_i$.  Let
$K_i'=K_{i-1}(\alpha_i')$ and
$K_i''=K_{i-1}(\alpha_i'')$.  We wish to determine the
ramification breaks for the $C_p$-extensions
$K_i'/K_{i-1}$ and $K_i''/K_{i-1}$.

     First consider $K_i'/K_i$. By Theorem~\ref{main}(a)
we have $[K_i':K]=p^i$. Thus $K_i'\not=K_{i-1}$ and
$K_i'/K_{i-1}$ is indeed a $C_p$-extension.  Let $b_i'$
be the ramification break of $K_i'/K_{i-1}$.  Then by
Lemma~\ref{ASram}(a) and the lower bound on $b_i$ we get
$b_i'\le-p^{i-1}v_K(d_i)<b_i$.  By Lemma~\ref{ASram}(b)
there is $\ell'\in K_{i-1}$ such that
$v_{K_{i-1}}(d_i-\wp(\ell'))=-b_i'>-b_i$.  Now consider
$K_i''/K_i$.  Since $a_i\in K$, $K(\alpha_i'')/K$ is a
$C_p$-extension with ramification break $-v_K(a_i)=u_i$.
By Lemma~\ref{breakshift} the ramification break of
$K_i''/K_{i-1}$ is $\psi_{K_{i-1}/K}(u_i)$.  Since
$\psi_{K_i/K}=\psi_{K_i/K_{i-1}}\circ\psi_{K_{i-1}/K}$
and $\psi_{K_i/K_{i-1}}(x)=x$ for $x\leq b_i$, we get
$\psi_{K_{i-1}/K}(u_i)=\psi_{K_i/K}(u_i)=b_i$. Since
$K_i''/K_{i-1}$ has ramification break $b_i$, by
Lemma~\ref{ASram}(b) there is $\ell''\in K_{i-1}$ such
that $v_{K_{i-1}}(a_i-\wp(\ell''))=-b_i$.

     We have shown that $K_i'K_i''/K_{i-1}$ is a
$(C_p\times C_p)$-extension with upper breaks
$b_i'<b_i$.  There are $p+1$ $C_p$-subextensions of
$K_i'K_i''/K_{i-1}$, namely $K_i''$ and
$K_{i-1}(\alpha_i'+s\alpha_i'')$ for $s\in \F_p$.
We are interested in the ramification break for
$K_i/K_{i-1}$, which is the $s=1$ case.  Note that
$K_i=K_{i-1}(\alpha_i-\ell'-\ell'')$, with
\[\wp(\alpha_i-\ell'-\ell'')=(d_i-\wp(\ell'))
+(a_i-\wp(\ell'')).\]
Since $v_{K_{i-1}}(d_i-\wp(\ell'))>
v_{K_{i-1}}(a_i-\wp(\ell''))$ we get
\[v_{K_{i-1}}((d_i-\wp(\ell'))+(a_i-\wp(\ell'')))
=v_{K_{k-1}}(a_i-\wp(\ell''))=-b_i.\]
Since $p\nmid b_i$, it follows that the ramification
break of $K_i/K_{i-1}$ is $b_i$.  Therefore
$b_i$ is a lower ramification break of $K_i/K$, so
$\phi_{K_i/K}(b_i)=u_i$ is an upper ramification break
of $K_i/K$.  Using induction we deduce that $K_i/K$ has
upper ramification breaks $u_1,\ldots,u_{i-1},u_i$.
\end{proof}

     Theorem~\ref{ramfilt} allows us to construct
$G$-extensions which have certain specified sequences of
upper ramification breaks:

\begin{cor} \label{ramfiltcor}
Let $(G,\{G_{(i)}\})$, $K$, $a_i$, $D_i$, $d_i$, $K_i$,
$u_i$, $b_i$ be as in Theorem~\ref{ramfilt}, and for
$i\not\in\Sigma_G$ let $l_i$ denote the total degree of
$D_i$.  If $b_i>p^{i-2}l_iu_{i-1}$ for all
$i\not\in\Sigma_G$ then the conclusions of
Theorem~\ref{ramfilt} hold for $K_0,K_1,\dots,K_n$.
\end{cor}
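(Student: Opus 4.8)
The plan is to show that the hypothesis of this corollary forces the hypothesis of Theorem~\ref{ramfilt}, namely $b_i>-p^{i-1}v_K(d_i)$ for every $i\notin\Sigma_G$; parts (a), (b) and (c) then follow word for word from that theorem. So everything reduces to a lower bound for $v_K(d_i)$, where $d_i=D_i(\alpha_1,\dots,\alpha_{i-1})$ and $D_i\in\F_p[Y_1,\dots,Y_{i-1}]$ has total degree $l_i$. First I would record that for $i\notin\Sigma_G$ one has $l_i\ge 1$, since $D_i\notin\F_p$ by Proposition~\ref{Di}(a), and also that such an $i$ is necessarily $\ge 2$, so that $u_{i-1}$ and the exponent $i-2$ make sense.

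The estimate itself is short. Write $D_i$ as an $\F_p$-linear combination of monomials $Y_1^{e_1}\cdots Y_{i-1}^{e_{i-1}}$ with $e_1+\dots+e_{i-1}\le l_i$. The value of such a monomial at $(\alpha_1,\dots,\alpha_{i-1})$ has valuation $\sum_{j<i}e_j\,v_K(\alpha_j)$, and using $v_K(\alpha_j)=-p^{-1}u_j$ from Theorem~\ref{ramfilt}(b), together with $u_1<\dots<u_{i-1}$ and $e_j\ge 0$, this is at least $-p^{-1}u_{i-1}\sum_{j<i}e_j\ge-p^{-1}l_iu_{i-1}$; the constant term of $D_i$ lies in $\F_p$ and so has valuation $\ge 0>-p^{-1}l_iu_{i-1}$. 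Hence $v_K(d_i)\ge-p^{-1}l_iu_{i-1}$ by the non-archimedean triangle inequality, and therefore
\[-p^{i-1}v_K(d_i)\le p^{i-2}l_iu_{i-1}<b_i,\]
which is exactly the inequality Theorem~\ref{ramfilt} requires at index $i$.

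The delicate point --- and the main thing to get right --- is that the equality $v_K(\alpha_j)=-p^{-1}u_j$ used above is itself part of the conclusion of Theorem~\ref{ramfilt}, so one cannot simply invoke that theorem: the estimate and the theorem's inductive proof must be interleaved. The clean way is induction on $i$, running parallel to the proof of Theorem~\ref{ramfilt}. At stage $i$ with $i\notin\Sigma_G$, the only data needed to evaluate $d_i$ and to verify $b_i>-p^{i-1}v_K(d_i)$ via the estimate above are the valuations $v_K(\alpha_1),\dots,v_K(\alpha_{i-1})$, which were established at earlier stages; and this inequality is precisely the hypothesis that allows the argument in the proof of Theorem~\ref{ramfilt} to pass from $i-1$ to $i$. (Equivalently, one may apply Theorem~\ref{ramfilt} to the $G/G_{(i-1)}$-extension $K_{i-1}/K$, noting that the split/nonsplit conditions indexed below $i$ are unchanged when one passes to this quotient, in order to obtain $v_K(\alpha_j)=-p^{-1}u_j$ for $j<i$ before feeding the estimate back in.) Beyond this bookkeeping, the monotonicity $u_j\le u_{i-1}$, and the observations $l_i\ge 1$ and $i\ge 2$, the argument is routine.
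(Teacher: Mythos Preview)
Your proposal is correct and matches the paper's proof essentially line for line: the paper also proceeds by induction on $i$, using the inductive hypothesis $v_K(\alpha_h)=-p^{-1}u_h$ for $h<i$ to get $v_K(d_i)\ge -p^{-1}l_iu_{i-1}$, which combined with the assumption $b_i>p^{i-2}l_iu_{i-1}$ yields $b_i>-p^{i-1}v_K(d_i)$ and then (via Lemma~\ref{known}(b)) $v_K(d_i+a_i)=-u_i$ and $v_K(\alpha_i)=-p^{-1}u_i$, closing the induction. You correctly identified the one delicate point---that the valuation of $\alpha_j$ is itself part of the conclusion, so the estimate and the inductive step of Theorem~\ref{ramfilt} must be interleaved---and handled it just as the paper does.
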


\begin{proof}
We prove by induction that
$v_K(d_i+a_i)=v_K(a_i)=-u_i$,
$v_K(\alpha_i)=-p^{-1}u_i$, and $b_i>-p^{i-1}v_K(d_i)$
for $1\le i\le n$.  This is clear for
$i\in\Sigma_G$ since $d_i=0$ in this case.  Let
$2\le i\le n$ with $i\not\in\Sigma_G$ and assume the
claim holds for $1\le h<i$.  Then
$v_K(\alpha_h)=-p^{-1}u_h\ge-p^{-1}u_{i-1}$ for
$1\le h<i$.  Using the assumption
$b_i>p^{i-2}l_iu_{i-1}$ we get
$v_K(d_i)\ge-p^{-1}l_iu_{i-1}>-p^{1-i}b_i$, and hence
$b_i>-p^{i-1}v_K(d_i)$.  Lemma~\ref{known}(b) then gives
$v_K(d_i)>-p^{1-i}b_i\ge-u_i=v_K(a_i)$.  It follows that
$v_K(d_i+a_i)=v_K(a_i)$, and hence that
$v_K(\alpha_i)=-p^{-1}u_i$.  Since we have shown that
the hypotheses of Theorem~\ref{ramfilt} hold, the
conclusions of the theorem hold as well.
\end{proof}

     Let $K$ be a local field of characteristic $p$.
Maus \cite{maus} showed that for every sequence of
positive integers $u_1,\ldots,u_n$ such that $p\nmid
u_i$ for $1\le i\le n$ and $u_{i+1}>pu_i$ for $1\le i\le
n-1$ there exists a totally ramified $C_{p^n}$-extension
$L/K$ whose sequence of upper ramification breaks is
$u_1,\ldots,u_n$.  The following corollary shows that a
similar result holds with $C_{p^n}$ replaced by an
arbitrary $p$-filtered group.

\begin{cor} \label{Mbreak}
Let $(G,\{G_{(i)}\})$ be a $p$-filtered group of order
$p^n$.  Then there is $M\ge1$, depending only on
$(G,\{G_{(i)}\})$, with the following property: Let $K$
be a local field of characteristic $p$ and let
$u_1,\ldots,u_n$ be a sequence of positive integers such
that $p\nmid u_i$ for $1\le i\le n$ and $u_{i+1}>Mu_i$
for $1\le i\le n-1$.  Then there exists a totally
ramified Galois extension $L/K$ such that
\begin{enumerate}[(a)]
\item $\Gal(L/K)\cong G$.
\item The upper ramification breaks of $L/K$ are
$u_1,\ldots,u_n$.
\item The ramification subgroups of $\Gal(L/K)\cong G$
are the groups $G_{(i)}$ in the filtration of $G$.
\end{enumerate}
\end{cor}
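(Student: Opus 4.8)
The plan is to deduce Corollary~\ref{Mbreak} from Corollary~\ref{ramfiltcor} by choosing the constant $M$ to be large enough that the hypothesis $b_i>p^{i-2}l_iu_{i-1}$ of Corollary~\ref{ramfiltcor} is automatically satisfied whenever the breaks grow at rate $M$, and then exhibiting suitable elements $a_i\in K$. First I would set $M=\max\{p^{n-1}l_i: i\notin\Sigma_G,\ 2\le i\le n\}$ (taking $M=1$ if $\Sigma_G=\{1,\ldots,n\}$, i.e.\ if $G$ is elementary abelian, so that no $D_i$ is nonconstant); since the $l_i$ and $n$ depend only on the $p$-filtered group $(G,\{G_{(i)}\})$, so does $M$. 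Here the $l_i$ are the total degrees of the polynomials $D_i$ produced by Proposition~\ref{Di}, which indeed depend only on $(G,\{G_{(i)}\})$.

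Next, given $K$ and a sequence $u_1<\cdots<u_n$ of positive integers prime to $p$ with $u_{i+1}>Mu_i$, I would pick any $a_i\in K$ with $v_K(a_i)=-u_i$ (possible since $K$ is a local field, so $v_K$ is surjective onto $\Z$), and form the tower $K_0\subset K_1\subset\cdots\subset K_n$ as in Theorem~\ref{ramfilt}, with $L=K_n$. It remains to check the hypothesis of Corollary~\ref{ramfiltcor}, namely $b_i>p^{i-2}l_iu_{i-1}$ for all $i\notin\Sigma_G$, where $b_1=u_1$ and $b_{i+1}-b_i=p^i(u_{i+1}-u_i)$. The key computation is a lower bound for $b_i$ in terms of $u_{i-1}$: since $b_i\ge b_{i-1}+p^{i-1}(u_i-u_{i-1})\ge p^{i-1}(u_i-u_{i-1})$ and, using $u_i>Mu_{i-1}$, we get $u_i-u_{i-1}>(M-1)u_{i-1}$, so $b_i>p^{i-1}(M-1)u_{i-1}$. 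Comparing with the required bound $p^{i-2}l_iu_{i-1}$, it suffices to have $p^{i-1}(M-1)\ge p^{i-2}l_i$, i.e.\ $p(M-1)\ge l_i$; this holds for all relevant $i$ by the choice of $M$ (in fact with room to spare). Hence the hypotheses of Corollary~\ref{ramfiltcor} are met, so its conclusions — which are exactly the conclusions of Theorem~\ref{ramfilt} — give that $L/K$ is a totally ramified $G$-extension with $\mu(\Gal(L/K_i))=G_{(i)}$, with upper breaks $u_1,\ldots,u_n$, and with ramification subgroups precisely the $\Gal(L/K_i)=$ (images of) the $G_{(i)}$. This yields (a), (b), (c).

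The main obstacle — really the only point requiring care — is the bookkeeping in the inequality chain: one must track the correct recursion $b_{i+1}-b_i=p^i(u_{i+1}-u_i)$ versus the off-by-one in the exponent appearing in Corollary~\ref{ramfiltcor}'s hypothesis, and make sure $M$ is defined using the genuinely $G$-intrinsic quantities $n$ and $l_i=\deg D_i$ rather than anything depending on $K$ or on the chosen $u_i$. One should also handle the degenerate case where $G$ is elementary abelian separately (then every $i\in\Sigma_G$, all $D_i=0$, the hypothesis of Corollary~\ref{ramfiltcor} is vacuous, and $M=1$ works), though this is subsumed by the convention that an empty maximum is $1$. No genuinely new ideas are needed beyond Corollary~\ref{ramfiltcor}; this corollary is a clean packaging of the growth condition.
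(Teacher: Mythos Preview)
Your proposal is correct and follows essentially the same route as the paper: reduce to Corollary~\ref{ramfiltcor} by choosing $M$ large enough to force $b_i>p^{i-2}l_iu_{i-1}$. The paper's argument is a bit tighter---it uses the simple bound $b_i\ge u_i$ (immediate from the recursion, since $b_{i+1}-b_i=p^i(u_{i+1}-u_i)\ge u_{i+1}-u_i$) and takes $M=\max\{p^{i-2}l_i:i\notin\Sigma_G\}$, so that $b_i\ge u_i>Mu_{i-1}\ge p^{i-2}l_iu_{i-1}$ directly; your chain via $b_i>p^{i-1}(M-1)u_{i-1}$ works but yields the unnecessarily large constant $M=\max\{p^{n-1}l_i\}$.
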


\begin{proof}
If $\Sigma_G=\{1,2,\ldots,n\}$ set $M=1$.  Otherwise,
we use the notation of Corollary~\ref{ramfiltcor} to
define
\[M=\max\{p^{i-2}l_i:1\le i\le n,\;i\not\in\Sigma_G\}.\]
Let $u_1,\dots,u_n$ be positive integers such that
$p\nmid u_i$ for $1\le i\le n$ and $u_i>Mu_{i-1}$ for
$2\le i\le n$.  Then $u_i>u_{i-1}$, and for
$i\not\in\Sigma_G$ we get
$b_i\ge u_i>p^{i-2}l_iu_{i-1}$.  Therefore by
Corollary~\ref{ramfiltcor} there is an extension $L/K$
with the specified properties.
\end{proof}

     It would be interesting to know whether
Corollary~\ref{Mbreak} holds with $M=p$.

\section{Scaffolds, Galois module structure, and
Hopf orders}

Let $(G,\{G_{(i)}\})$ be a $p$-filtered group and let
$K$ be a local field of characteristic $p$ with perfect
residue field.  A Galois scaffold
$(\{\Psi_i\},\{\lambda_t\})$ for a $G$-extension $K_n/K$
consists of $\Psi_i\in K[G]$ for $1\le i\le n$ and
$\lambda_t\in K_n$ for all $t\in\Z$.  These are chosen
so that $v_{K_n}(\lambda_t)=t$ and $\Psi_i(\lambda_t)$
can be computed up to a certain ``precision'' $\cc\ge1$.
Note that if a Galois scaffold
$(\{\Psi_i\},\{\lambda_t\})$ for $K_n/K$ has precision
$\cc$, and $1\le\cc'\le\cc$, then it is also correct to
say that $(\{\Psi_i\},\{\lambda_t\})$ has precision
$\cc'$.  The existence of a Galois scaffold for $K_n/K$
facilitates the computation of the Galois module
structure of $\OO_{K_n}$ and its ideals.  For precise
definitions and some basic properties of Galois
scaffolds see \cite{bce}.

     In this section we show how the hypotheses of
Theorem~\ref{ramfilt} can be strengthened to guarantee
that the $G$-extension $K_n/K$ has a Galois scaffold.
This leads to sufficient conditions for $\OO_{K_n}$ be
be free over its associated order (Corollary~\ref{GMS}),
and sufficient conditions for the associated order of
$\OO_{K_n}$ to be a Hopf order
(Corollary~\ref{Hopforder}).

\begin{theorem} \label{scafcond}
Let $(G,\{G_{(i)}\})$ be a $p$-filtered group of order
$p^n$ and let $D_1,\dots,D_n$ be the polynomials
associated to $(G,\{G_{(i)}\})$ by Proposition~\ref{Di}.
Let $K$ be a local field of characteristic $p$ with
perfect residue field and let $a\in K^{\times}$ with
$p\nmid v_K(a)$.  For $1\le i\le n$ let
$\omega_i\in K^{\times}$ and set
$a_i=a\omega_i^{p^{n-1}}$.  Set $u_i=-v_K(a_i)$ and
assume that $0<u_1<\cdots<u_n$.  As in
Theorem~\ref{main} we define $K_0,K_1,\ldots,K_n$
recursively by $K_0=K$ and $K_i=K_{i-1}(\alpha_i)$ for
$1\le i\le n$, where $\alpha_i$ satisfies
$\alpha_i^p-\alpha_i=d_i+a_i$ with
$d_i=D_i(\alpha_1,\ldots,\alpha_{i-1})$.  Define
$b_1<b_2<\cdots<b_n$ recursively by $b_1=u_1$ and
$b_{i+1}-b_i=p^i(u_{i+1}-u_i)$ for $1\le i\le n-1$.  If
\begin{align} \label{weak1}
b_i&>-p^{n-1}v_K(d_i)-p^{n-i}b_{i-1}+p^{n-1}u_{i-1}, \\
b_i&>p^{n-1}u_{i-1} \label{weak2}
\end{align}
for all $2\le i\le n$ with $i\not\in\Sigma_G$ then the
extensions $K_0\subset K_1\subset\cdots\subset K_n$
satisfy conclusions (a)--(c) of Theorem~\ref{ramfilt},
plus the additional condition
\begin{enumerate}[(a)]
\setcounter{enumi}{3}
\item $K_n/K$ admits a Galois scaffold with precision
\[\cc=\min\{p^{n-1}v_K(d_i)+p^{n-i}b_{i-1}+b_i
-p^{n-1}u_{i-1},\:b_i-p^{n-1}u_{i-1}:
2\le i\le n,\:i\not\in\Sigma_G\}.\]
Furthermore, we have $\Psi_i\in K[G_{(n-i)}]$ for
$1\le i\le n$.
\end{enumerate}
\end{theorem}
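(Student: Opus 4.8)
The plan is to first verify that the hypotheses of Theorem~\ref{ramfilt} follow from \eqref{weak1} and \eqref{weak2} — so that conclusions (a)--(c) come for free — and then to build the scaffold explicitly, exploiting the rigid shape of the specialization $a_i=a\omega_i^{p^{n-1}}$. For the first part, fix $i\not\in\Sigma_G$ with $2\le i\le n$ (for $i\in\Sigma_G$ everything is trivial since $D_i=0$, and $1\in\Sigma_G$ always). Rewriting \eqref{weak1} as $-v_K(d_i)<p^{1-n}b_i+p^{1-i}b_{i-1}-u_{i-1}$ and combining with the bound $b_{i-1}\le p^{i-2}u_{i-1}$ from Lemma~\ref{known}(b), which gives $p^{1-i}b_{i-1}-u_{i-1}<0\le(p^{1-i}-p^{1-n})b_i$, one obtains $-p^{i-1}v_K(d_i)<b_i$. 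Hence the hypothesis of Theorem~\ref{ramfilt} holds, and with it conclusions (a)--(c), together with the valuation facts $v_K(d_i+a_i)=v_K(a_i)=-u_i$ and $v_K(\alpha_i)=-p^{-1}u_i$.

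Next I would record the two structural consequences of the specialization that make a scaffold possible. Writing $v_K(a)=-u$ (so $p\nmid u$) and $w_i=v_K(\omega_i)$, we have $u_i=u-p^{n-1}w_i$, whence $u_i\equiv u\pmod{p^{n-1}}$; in particular $p\nmid u_i$ once $n\ge2$ (and we may assume this throughout, the case $n=1$ being degenerate). More importantly, $b_{i+1}-b_i=p^i(u_{i+1}-u_i)=p^{\,n-1+i}(w_i-w_{i+1})$ is divisible by $p^n$ for each $i$, so all the lower ramification breaks $b_1,\dots,b_n$ lie in one residue class $\mathfrak b\pmod{p^n}$ with $\gcd(\mathfrak b,p)=1$. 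This single-residue-class property is precisely the feature under which one can write down a scaffold whose shifts $\mathfrak b_1,\dots,\mathfrak b_n$ are all congruent modulo $p^n$.

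For the construction, let $\mu:\Gal(K_n/K)\to G$ be as in Theorem~\ref{ramfilt}(a) and choose, for each $j$, an element $\sigma_j\in\Gal(K_n/K)$ whose $\mu$-image generates $G_{(j-1)}/G_{(j)}$; then $\sigma_j\in G_{(j-1)}\smallsetminus G_{(j)}$ has ramification number $b_j$, and $\sigma_{n-i+1},\dots,\sigma_n$ generate $G_{(n-i)}=\Gal(K_n/K_{n-i})$. I would take $\Psi_i=\sigma_{n-i+1}-1$, modified if necessary by a lower-order element of $K[G_{(n-i)}]$ to absorb the discrepancies coming from the polynomials $D_j$ and the nonsplit layers; then $\Psi_i\in K[G_{(n-i)}]$, and $\Psi_i$ acts, up to higher-order error, as a shift by $\mathfrak b_i$, which may be taken equal to $b_{n-i+1}$. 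For the family $\{\lambda_t\}$ I would build elements of each valuation from the generators $\alpha_1,\dots,\alpha_n$ (after dividing by the appropriate powers of $\omega_i$, the elements $\alpha_i\omega_i^{-p^{n-2}}$ all approximate a common $p$-th-root-type quantity, which yields a convenient element of valuation coprime to $p^n$) together with a uniformizer $\pi_K$ of $K$, arranged so that $v_{K_n}(\lambda_t)=t$ and $\lambda_t,\lambda_{t'}$ differ by a factor in $K^\times$ whenever $t\equiv t'\pmod{p^n}$. It then remains to check the scaffold axioms of \cite{bce}: that $\Psi_i\lambda_t$ equals a unit times $\lambda_{t+\mathfrak b_i}$ when the relevant digit of $t$ permits, and $0$ otherwise, in both cases modulo an error of relative valuation at least $\cc$.

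The heart of the argument is this last verification, and its engine is the identity $\wp(\sigma\alpha_j-\alpha_j)=\sigma(d_j)-d_j$, valid for every $\sigma\in\Gal(K_n/K)$ because $a_j\in K$: the Galois perturbation of each generator $\alpha_j$ is controlled entirely by how far $\sigma$ moves $d_j=D_j(\alpha_1,\dots,\alpha_{j-1})$. Expanding $\sigma(d_j)-d_j$ via the total degree of $D_j$ and the already-established valuations $v_K(\alpha_h)=-p^{-1}u_h$ for $h<j$, bounding it recursively, and then propagating these bounds through the product defining $\Psi_i$ and through its correction term, produces exactly the two competing error exponents $p^{n-1}v_K(d_i)+p^{n-i}b_{i-1}+b_i-p^{n-1}u_{i-1}$ and $b_i-p^{n-1}u_{i-1}$ that appear in the formula for $\cc$; conditions \eqref{weak1} and \eqref{weak2} are precisely what make both of them positive, hence $\cc\ge1$. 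I expect the main obstacle to be reconciling these two error sources simultaneously — that is, showing that the correction to $\Psi_i$ can be chosen so that neither term degrades the precision beyond $\cc$ — and this is where the precise form $a_i=a\omega_i^{p^{n-1}}$ of the specialization, which ties all the generators to a single $p$-th power up to high-order terms, is indispensable.
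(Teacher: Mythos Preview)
Your reduction to Theorem~\ref{ramfilt} is correct and matches the paper, as does your observation that the specialization forces $b_i\equiv b_1\pmod{p^n}$ with $p\nmid b_1$, which is indeed the structural fact the paper exploits (it is exactly Assumption~2.6 of \cite{large}).  The difficulty is that from this point on your proposal becomes a wish list rather than a construction: you assert that $\Psi_i$ should be $\sigma_{n-i+1}-1$ ``modified if necessary'' and that $\lambda_t$ should be built from the $\alpha_i$ and $\pi_K$, but you give no mechanism for producing an element of $K_j$ of valuation exactly $-b_j\pmod{p^j}$, nor for specifying the modification of $\Psi_i$.  Those two constructions are the entire content of the theorem, and the stated precision $\cc$ cannot be extracted without them.

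The paper does not attempt to verify the scaffold axioms of \cite{bce} directly.  Instead it invokes Theorem~2.10 of \cite{large} as a black box, and the work goes into checking that theorem's hypotheses.  The crucial device you are missing is an explicit determinant
\[
\Y_j=\det\bigl[\valpha,\phi^{n-j}(\vomega),\phi^{n-j+1}(\vomega),\dots,\phi^{n-2}(\vomega)\bigr]
\]
built from $\valpha=(\alpha_1,\dots,\alpha_j)^T$ and $\vomega=(\omega_1,\dots,\omega_j)^T$; one shows $v_{K_j}(\Y_j)\equiv-b_j\pmod{p^j}$ by a Moore-determinant computation that uses the relation $\phi(\valpha)=\valpha+\d+a\phi^{n-1}(\vomega)$, and then $\X_j=t_{jj}^{-1}\Y_j$ (with $t_{ij}$ the cofactors) has $v_{K_j}(\X_j)=-b_j$.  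Equally essential is a refinement of your choice of $\sigma_i$: the paper picks $\sigma_i$ so that, in addition to $(\sigma_i-1)\alpha_i=1$, one has $\sigma_i(\alpha_h)=\alpha_h$ for every $h\in\Sigma_G$ with $h\ne i$; without this the terms with $h\in\Sigma_G$ would survive in the error estimate and the claimed precision $\cc$ would not follow.  Finally, the two error exponents you quote do not arise from ``expanding $\sigma(d_j)-d_j$ via the total degree of $D_j$'' --- total degree plays no role here --- but from the sharper bound $v_{K_n}((\sigma_i-1)\alpha_h)\ge\min\{p^{n-1}v_K(d_h)+p^{n-h}b_i,\,0\}$, obtained by splitting $\alpha_h=\alpha_h'+\alpha_h''$ with $\wp(\alpha_h')=d_h$ and $\wp(\alpha_h'')=a_h$, and then minimizing over $i<h$ with $h\notin\Sigma_G$.
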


\begin{remark} \label{gaps}
The precision $\cc$ given in the theorem is equal to the
minimum of the gaps in the inequalities (\ref{weak1})
and (\ref{weak2}).
\end{remark}

\begin{remark}
If $\Sigma_G=\{1,2,\dots,n\}$ then $G$ is an elementary
abelian $p$-group and our scaffold has infinite
precision (cf.\ the characteristic-$p$ case of
Theorem~3.5 of \cite{large}).
\end{remark}

\begin{proof}[Proof of Theorem \ref{scafcond}]
It follows from (\ref{weak1}) and Lemma~\ref{known}(b)
that for $i\not\in\Sigma_G$ we have
\begin{equation} \label{bibound}
b_i>-p^{n-1}v_K(d_i)-p^{n-i}b_{i-1}+p^{n-1}u_{i-1}
\ge-p^{n-1}v_K(d_i).
\end{equation}
Hence the extensions
$K_0\subset K_1\subset\cdots\subset K_n$ satisfy the
conclusions of Theorem~\ref{ramfilt}.  To prove (d) we
use \cite{large}, which gives a systematic method for
constructing Galois scaffolds.  By our assumptions on
$a_i$ we have $p\nmid u_1$ and
$u_i\equiv u_j\pmod{p^{n-1}}$ for $1\leq i,j\leq n$.
Thus $p\nmid b_1$ and $b_i\equiv b_j\pmod{p^n}$,
so Assumptions~2.2 and 2.6 of \cite{large} are
satisfied.  To apply Theorem~2.10 of \cite{large} we
must choose $\sigma_i\in\Gal(K_n/K_{i-1})$ as described
in Choice~2.1 of \cite{large}, and $\X_j\in K_j$ as
described in Choice~2.3 of \cite{large}.

     As in \cite{cyclic}, we begin by constructing
$\Y_j\in K_j$ such that
$v_{K_j}(\Y_j)\equiv-b_j\pmod{p^j}$.  We then obtain
$\X_j$ satisfying Choice~2.3 of \cite{large} by
multiplying $\Y_j$ by an appropriate element of
$K^{\times}$.  Set
\[\vomega=\begin{bmatrix}
\omega_1\\\omega_2\\\vdots\\\omega_j
\end{bmatrix}\in K^j \hspace{1cm}
\valpha=\begin{bmatrix}
\alpha_1\\\alpha_2\\\vdots\\\alpha_j
\end{bmatrix}\in(K^{sep})^j.\]
Let $\phi:(K^{sep})^j\ra(K^{sep})^j$ be the map induced
by the $p$-Frobenius on $K^{sep}$ and set
\begin{align} \nonumber
\Y_j&=\begin{vmatrix}
  \alpha_1&\omega_1^{p^{n-j}}&\cdots &\omega_1^{p^{n-2}}\\
  \alpha_2&\omega_2^{p^{n-j}}&\cdots &\omega_2^{p^{n-2}}\\
  \vdots & \vdots&&\vdots \\
  \alpha_j&\omega_j^{p^{n-j}}&\cdots &\omega_j^{p^{n-2}}\\
\end{vmatrix} \\[2mm]
&=\det[\valpha,\phi^{n-j}(\vomega),
\phi^{n-j+1}(\vomega),\ldots,\phi^{n-2}(\vomega)].
\label{Yjdet}
\end{align}
For $1\le i\le j$ we have $\alpha_i\in K_j$ and
$\omega_i\in K$.  Therefore $\Y_j\in K_j$.  

     For $1\le i\le j$ set $m_i=-v_K(\omega_i)$.  As in
the proof of Proposition~1 of \cite{cyclic}, we expand
in cofactors along the first column to get
\begin{equation} \label{defn-Y}
\Y_j=t_{1j}\alpha_1+t_{2j}\alpha_2+\cdots+t_{jj}\alpha_j,
\end{equation}
with $t_{ij}\in K$.
Since $m_1<\dots<m_j$, the $t_{ij}$ satisfy
\begin{align} \nonumber
v_K(t_{ij})&=v_K\left(\omega_1^{p^{n-j}}
\omega_2^{p^{n-j+1}}\dots\omega_{i-1}^{p^{n-j+i-2}}
\omega_{i+1}^{p^{n-j+i-1}}\dots\omega_j^{p^{n-2}}
\right), \\
&=-p^{n-j}(m_1+pm_2+\cdots+p^{i-2}m_{i-1}+p^{i-1}m_{i+1}+
\cdots+p^{j-2}m_j). \label{vKtik}
\end{align}
It follows that for $2\le i\le j$ we have
\begin{align*}
v_K(t_{ij})-v_K(t_{i-1,j})
&=-p^{n-j}(p^{i-2}m_{i-1}-p^{i-2}m_i)\\
&=p^{i-j-1}(p^{n-1}m_i-p^{n-1}m_{i-1})\\
&=p^{i-j-1}(u_i-u_{i-1})\\
&=p^{-j}(b_i-b_{i-1}).
\end{align*}
Hence $v_K(t_{ij})-v_K(t_{hj})$ is a telescoping sum for
$1\leq h\leq i\leq j$.  Therefore we get
\begin{align} \nonumber
v_K(t_{ij})-v_K(t_{hj})&=p^{-j}(b_i-b_h) \\
v_K(t_{ij}^{p^j})-v_K(t_{hj}^{p^j})&=b_i-b_h.
\label{v-t}
\end{align}

     We claim that for $0\le i\le j-1$ we have
\begin{equation} \label{phiiY}
\phi^i(\Y_j)=\det[\valpha+\d+\cdots+\phi^{i-1}(\d),
\phi^{n-j+i}(\vomega),\ldots,\phi^{n-2+i}(\vomega)].
\end{equation}
The case $i=0$ is given by (\ref{Yjdet}).  Let
$0\le i\le j-2$ and assume that (\ref{phiiY}) holds for
$i$.  Then
\begin{align*}
\phi^{i+1}(\Y_j)
&=\phi(\det[\valpha+\d+\cdots+\phi^{i-1}(\d),
\phi^{n-j+i}(\vomega),\ldots,\phi^{n-2+i}(\vomega)]) \\
&=\det[\phi(\valpha)+\phi(\d)+\cdots+\phi^i(\d),
\phi^{n-j+i+1}(\vomega),\ldots,\phi^{n-1+i}(\vomega)] \\
&=\det[\valpha+a\phi^{n-1}(\vomega)+\d+\phi(\d)+
\cdots+\phi^i(\d),
\phi^{n-j+i+1}(\vomega),\ldots,\phi^{n-1+i}(\vomega)].
\end{align*}
Since $n-j+i+1\le n-1\le n-1+i$ it follows that
\[\phi^{i+1}(\Y_j)
=\det[\valpha+\d+\phi(\d)+\cdots+\phi^i(\d),
\phi^{n-j+i+1}(\vomega),\ldots,\phi^{n-1+i}(\vomega)].\]
Hence (\ref{phiiY}) holds with $i$ replaced by $i+1$.

     It follows by induction that (\ref{phiiY}) holds
for $i=j-1$.  Therefore we have
\begin{align}
\phi^j(\Y_j)&=\phi(\det[\valpha+\d+\cdots+\phi^{j-2}(\d),
\phi^{n-1}(\vomega),\ldots,\phi^{n+j-3}(\vomega)])
\nonumber \\
&=\det[\phi(\valpha)+\phi(\d)+\cdots+\phi^{j-1}(\d),
\phi^n(\vomega),\ldots,\phi^{n+j-2}(\vomega)] \nonumber \\
&=\det[\valpha+a\phi^{n-1}(\vomega)+\d+\phi(\d)+
\cdots+\phi^{j-1}(\d),
\phi^n(\vomega),\ldots,\phi^{n+j-2}(\vomega)].
\label{phinY}
\end{align}
The $(i,1)$ cofactor of (\ref{phinY}) is $t_{ij}^{p^j}$,
where $t_{ij}$ is the $(i,1)$ cofactor of (\ref{Yjdet}).
Since $d_1=0$ this gives
\begin{equation} \label{Ypn}
\Y_j^{p^j}=t_{1j}^{p^j}(\alpha_1+a\omega_1^{p^{n-1}})
+\sum_{i=2}^jt_{ij}^{p^j}\left(\alpha_i
+a\omega_i^{p^{n-1}}+\sum_{h=0}^{j-1}d_i^{p^h}\right).
\end{equation}
Using (\ref{vKtik}) we get
\begin{align*}
v_K(t_{1j}^{p^j}a\omega_1^{p^{n-1}})
&=v_K(a\omega_1^{p^{n-1}})+p^jv_K(t_{1j}) \\
&=-b_1-p^nm_2-p^{n+1}m_3-\cdots-p^{n+j-2}m_j.
\end{align*}
We claim that $v_K(\Y_j^{p^j})
=v_K(t_{1j}^{p^j}a\omega_1^{p^{n-1}})$.
To prove this it suffices to show that the other terms
in (\ref{Ypn}) all have $K$-valuation greater than
$v_K(t_{1j}^{p^j}a\omega_1^{p^{n-1}})$.

     Since $v_K(\alpha_i)
=p^{-1}v_K(a\omega_i^{p^{n-1}})<0$ we have
$v_K(\alpha_i)>v_K(a\omega_i^{p^{n-1}})$ for
$1\leq i\leq j$.  Therefore that it suffices to prove
that
\begin{alignat}{2} \label{lt1}
v_K(t_{1j}^{p^j}a\omega_1^{p^{n-1}})
&<v_K(t_{ij}^{p^j}a\omega_i^{p^{n-1}})\quad
&&(2\leq i\leq j) \\
v_K(t_{1j}^{p^j}a\omega_1^{p^{n-1}})
&<v_K(t_{ij}^{p^j}d_i^{p^h})
&&(2\leq i\leq j,\;0\le h\le j-1). \label{lt2}
\end{alignat}
We first observe that (\ref{lt1}) follows from
(\ref{v-t}):
\begin{align*}
v_K(t_{ij}^{p^j}a\omega_i^{p^{n-1}})-v_K(t_{1j}^{p^j}a\omega_1^{p^{n-1}})
&=(v_K(t_{ij}^{p^j})-v_K(t_{1j}^{p^j}))+(v_K(a\omega_i^{p^{n-1}})-v_K(a\omega_1^{p^{n-1}}))\\
  &=(b_i-b_1)+(-u_i+u_1)\\
  &=b_i-u_i>0.
\end{align*}
We now prove (\ref{lt2}).  By (\ref{bibound}) we have
$b_i>-p^{n-1}v_K(d_i)$.  Since $h\leq n-1$ it follows
that $b_i>-p^hv_K(d_i)$.  Hence by \eqref{v-t} we get
\begin{align*}
v_K(t_{ij}^{p^j}d_i^{p^h})
-v_K(t_{1j}^{p^j}a\omega_1^{p^{n-1}})
&=b_i-b_1+p^hv_K(d_i)+u_1 \\
&=b_i+p^hv_K(d_i)>0.
\end{align*}
This proves (\ref{lt2}), so we have
\[v_K(\Y_j^{p^j})=v_K(t_{1j}^{p^j}a\omega_1^{p^{n-1}})
=v_K(t_{1j}^{p^j})-b_1.\]
Using (\ref{v-t}) we get
\[v_{K_j}(\Y_j)=v_K(t_{jj}^{p^j})-b_j
=v_{K_j}(t_{jj})-b_j.\]
We have $t_{jj}\not=0$ by (\ref{vKtik}), so we may
define $\X_j=t_{jj}^{-1}\Y_j$.  Then
$v_{K_j}(\X_j)=-b_j$, and since $t_{jj}\in K$ we also
have $v_{K_j}(\Y_j)\equiv-b_j\pmod{p^j}$.  Since
$p\nmid b_j$ it follows that $K_j=K(\X_j)=K(\Y_j)$.

     Now that we have constructed $\X_1,\dots,\X_n$,
we need to pick $\sigma_i\in\Gal(K_n/K_{i-1})$ for
$1\le i\le n$ which satisfy the conditions of
Choices~2.1 and 2.3 of \cite{large}.  Thus we need to
choose $\sigma_i\in\Gal(K_n/K_{i-1})$ such that
$\sigma_i|_{K_i}$ is a generator for
$\Gal(K_i/K_{i-1})\cong C_p$ and
$v_{K_i}((\sigma_i-1)\X_i-1)>0$.  To satisfy these
conditions it is enough to choose
$\sigma_i\in\Gal(K_n/K_{i-1})$ such that
$(\sigma_i-1)\alpha_i=1$.  We will be imposing
additional conditions on $\sigma_i$, namely that
$\sigma_i(\alpha_h)=\alpha_h$ for certain $h$ in the
range $i<h\le n$.  The purpose of these extra conditions
is to maximize the precision of the scaffold provided by
\cite{large} (see (\ref{c1new}) below).

     Recall from the proof of Theorem~\ref{ramfilt} that
$K_h=K_{h-1}(\alpha_h)$, where
$\alpha_h=\alpha_h'+\alpha_h''$, $\alpha_h'$ is a root
of $Y^p-Y-d_h$, and $\alpha_h''$ is a root of
$Y^p-Y-a_h$.  For $h\in\Sigma_G$ we have $d_h=0$, so we
may choose $\alpha_h'=0$, and hence
$\alpha_h=\alpha_h''$.  For $1\le i\le n$ set
\[A_i=\{\alpha_h:i<h\le n,\;h\in\Sigma_G\}.\]
Then $K_i(A_i)/K_{i-1}$ is an elementary abelian
$p$-extension of rank $|A_i|+1$ (see
Figure~\ref{fields}(a)).  Therefore there is
$\rho_i\in\Gal(K_i(A_i)/K_{i-1})$ such that
$(\rho_i-1)\alpha_i=1$ and $\rho_i(\alpha_h)=\alpha_h$
for all $\alpha_h\in A_i$.  Since
$K_i(A_i)\subset K_n$ there is
$\sigma_i\in\Gal(K_n/K_{i-1})$ such that
$\sigma_i|_{K_i(A_i)}=\rho_i$.  Then
$(\sigma_i-1)\alpha_i=1$ and
$\sigma_i(\alpha_h)=\alpha_h$ for all $h\in\Sigma_G$
such that $h\not=i$.  It follows that $\sigma_i|_{K_i}$
generates $\Gal(K_i/K_{i-1})$, so $\sigma_i$ satisfies
the conditions of Choice~2.1 of \cite{large}.  Since
$\sigma_i(\alpha_h)=\alpha_h$ for $1\le h<i$, by
\eqref{defn-Y} we get $(\sigma_i-1)\Y_i=t_{ii}$ and
$(\sigma_i-1)\X_i=1$.  Therefore $\sigma_i$ and $\X_i$
satisfy the conditions of Choice~2.3 of \cite{large}.

\begin{figure}
\caption{Field diagrams for Theorem~\ref{scafcond}}
\label{fields}
\begin{center}

\begin{tikzpicture}[node distance = 2cm, auto]
\node (Ki1) {$K_{i-1}$};
\node (Ki) [above of=Ki1, left of=Ki1] {$K_i$};
\node (Ki1A) [above of=Ki1, right of=Ki1]
{$K_{i-1}(A_i)$};
\node (KiA) [above of=Ki, right of=Ki] {$K_i(A_i)$};
\node (Kn) [above of=KiA] {$K_n$};
\draw[-] (Ki) to (KiA);
\draw[-] (Ki1A) to (KiA);
\draw[-] (KiA) to (Kn);
\draw[-] (Ki1) to node [swap] {$C_p^{|A_i|}$} (Ki1A);
\draw[-] (Ki1) to node {$C_p$} (Ki);
\node [below of=Ki1, node distance=1cm] {(a)};
\end{tikzpicture}
\hspace{1cm}
\begin{tikzpicture}[node distance = 2cm, auto]
\node (Ki1) {$K_{i-1}$};
\node (Kh1) [above of=Ki1, left of=Ki1] {$K_{h-1}$};
\node (Ki1a'') [above of=Ki1, right of=Ki1]
{$K_{i-1}(\alpha_h'')$};
\node (Kh1a'') [above of=Kh1, right of=Kh1]
{$K_{h-1}(\alpha_h'')$};
\node (Kh) [above of=Kh1, left of=Kh1] {$K_h$};
\node (Kh1a') [above of=Kh1] {$K_{h-1}(\alpha_h')$};
\node (Kha'') [above of=Kh1a'] {$K_h(\alpha_h'')$};
\draw[-] (Ki1) to (Kh1);
\draw[-] (Ki1) to (Ki1a'');
\draw[-] (Kh1) to node [swap] {$C_p$} (Kh1a'');
\draw[-] (Ki1a'') to (Kh1a'');
\draw[-] (Kh1a'') to (Kha'');
\draw[-] (Kh1) to (Kh1a');
\draw[-] (Kh1) to node {$C_p$} (Kh);
\draw[-] (Kh) to (Kha'');
\draw[-] (Kh1a') to (Kha'');
\node [below of=Ki1, node distance=1cm] {(b)};
\end{tikzpicture}

\end{center}
\end{figure}
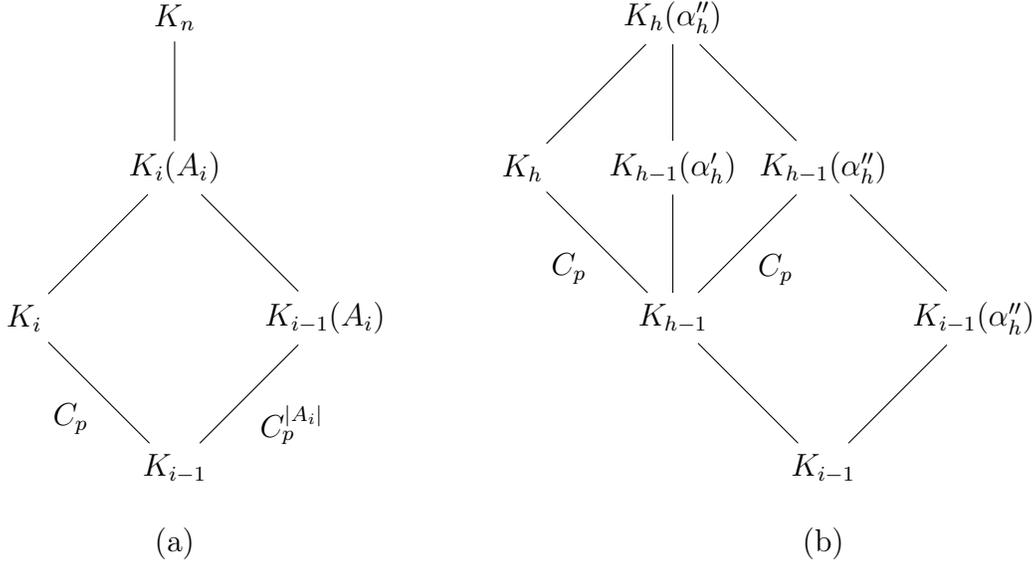

     In order to apply \cite{large} to get a Galois
scaffold for $K_n/K$ we need to look more closely at
the action of $K[\Gal(K_n/K)]$ on $K_n$.
Let $1\le i\le j\le n$.  By
Theorem~\ref{ramfilt}(b) the upper ramification breaks
of $K_j/K$ are $u_1,\ldots,u_j$.  Therefore the lower
ramification breaks of $K_j/K$ are $b_1,\ldots,b_j$.  In
particular, the lower ramification break
$i(\sigma_i|_{K_j})$ of $K_j/K$ associated to
$\sigma_i|_{K_j}$ is $b_i$.  Since
$p\nmid v_{K_j}(\X_j)$ this implies
\begin{equation} \label{sigiXj}
v_{K_j}((\sigma_i-1)\X_j)=v_{K_j}(\X_j)+b_i=b_i-b_j.
\end{equation}
By \eqref{defn-Y} we have
\begin{equation} \label{mueps}
(\sigma_i-1)\X_j=\mu_{ij}+\epsilon_{ij},
\end{equation}
with $\mu_{ij}=t_{ij}/t_{jj}$ and
\[\epsilon_{ij}=\frac{t_{i+1,j}}{t_{jj}}(\sigma_i-1)
\alpha_{i+1}+\cdots
+\frac{t_{j-1,j}}{t_{jj}}(\sigma_i-1)\alpha_{j-1}
+(\sigma_i-1)\alpha_j.\]
Then $\mu_{ij}\in K$ and $\epsilon_{ij}\in K_j$.
Furthermore, $\mu_{ii}=1$, $\epsilon_{ii}=0$, and for
$1\le i<j\le n$ we have
\begin{equation} \label{diff}
v_{K_n}(\epsilon_{ij})-v_{K_n}(\mu_{ij})\geq
\min\{v_{K_n}(t_{hj}(\sigma_i-1)\alpha_h):i<h\leq j\}
-v_{K_n}(t_{ij}).
\end{equation}
We view $\mu_{ij}$ as the ``main term'' and
$\epsilon_{ij}$ as the ``error term'' in the
decomposition (\ref{mueps}) of $(\sigma_i-1)\X_j$.

     Motivated by Assumption~2.9 of \cite{large} we
define
\[\cc_0:=\min\{v_{K_n}(\epsilon_{ij})-v_{K_n}(\mu_{ij})
-p^{n-1}u_i+p^{n-j}b_i:1\leq i<j\leq n\}.\]
Assume $\cc_0\ge1$.  Since $i<j$ it follows from
Lemma~\ref{known}(b) that $-p^{n-1}u_i+p^{n-j}b_i\le0$.
Hence the right side of (\ref{diff}) is positive for all
$1\le i<j\le n$.  Thus
$v_{K_j}(\mu_{ij})<v_{K_j}(\epsilon_{ij})$, so by
(\ref{sigiXj}) we get
\[b_i-b_j=v_{K_j}((\sigma_i-1)\X_j)=v_{K_j}(\mu_{ij}).\]
Therefore (\ref{mueps}) satisfies the conditions of
equation (5) of \cite{large}.  We can now apply
Theorem~2.10 of \cite{large} which says that $K_n/K$
admits a Galois scaffold $(\{\Psi_i\},\{\lambda_t\})$
with precision $\cc_0$.  The operators $\Psi_i$ are
defined recursively in Definition~2.7 of \cite{large}
using $\mu_{ij}\in K$ and $\sigma_i\in G_{(n-i)}$.
Therefore we have $\Psi_i\in K[G_{(n-i)}]$.

     It remains to show that $\cc_0\ge\cc$, where $\cc$
is the precision given in the statement of the theorem.
Using \eqref{v-t} we get
$v_{K_n}(t_{hj})-v_{K_n}(t_{ij})=p^{n-j}(b_h-b_i)$.
Therefore we can rewrite (\ref{diff}) as
\begin{equation} \label{rewrite}
v_{K_n}(\epsilon_{ij})-v_{K_n}(\mu_{ij})
\geq\min\{v_{K_n}((\sigma_i-1)\alpha_h)+p^{n-j}(b_h-b_i):
i<h\leq j\}.
\end{equation}
Set
\begin{equation} \label{c1}
\cc_1=\min\{v_{K_n}((\sigma_i-1)\alpha_h)+p^{n-j}b_h
-p^{n-1}u_i:1\leq i<h\leq j\leq n\}.
\end{equation}
Then by (\ref{rewrite}) we get $\cc_0\ge\cc_1$.  Hence
if $\cc_1\geq 1$ then $K_n/K$ has a Galois scaffold with
precision $\cc_1$.  For fixed $1\le i<h\le n$ the
expression in (\ref{c1}) is minimized by taking $j=n$.
Hence
\[\cc_1=\min\{v_{K_n}((\sigma_i-1)\alpha_h)+b_h
-p^{n-1}u_i:1\leq i<h\leq n\}.\]
Recall that $\sigma_i$ was chosen so that
$(\sigma_i-1)\alpha_h=0$ for all $h\in\Sigma_G$.
Therefore we have
\begin{equation} \label{c1new}
\cc_1=\min\{v_{K_n}((\sigma_i-1)\alpha_h)+b_h
-p^{n-1}u_i:1\leq i<h\leq n,\:h\not\in\Sigma_G\}.
\end{equation}

     Let $1\leq i<h\le n$ with $h\not\in\Sigma_G$.  In
the proof of Theorem~\ref{ramfilt} we saw that
$K_h(\alpha_h'')=K_{h-1}(\alpha_h',\alpha_h'')$ is a
$(C_p\times C_p)$-extension of $K_{h-1}$ (see
Figure~\ref{fields}(b)).  Therefore
$\alpha_h''\not\in K_h$.  Let $\tau_{ih}$ be the
(uniquely determined) element of
$\Gal(K_h(\alpha_h'')/K_{i-1}(\alpha_h''))$ such that
$\tau_{ih}|_{K_h}=\sigma_i|_{K_h}$.  Since
$\tau_{ih}(\alpha_h'')=\alpha_h''$ we get
\[(\sigma_i-1)(\alpha_h)=(\tau_{ih}-1)(\alpha_h)
=(\tau_{ih}-1)(\alpha_h').\]
Since $\alpha_h'$ is a root of $Y^p-Y-d_h$, it follows
that $(\sigma_i-1)(\alpha_h)=(\tau_{ih}-1)(\alpha_h')$
is a root of $Y^p-Y-(\sigma_i-1)d_h$.  We have
\begin{align*}
v_{K_{h-1}}((\sigma_i-1)d_h)&\ge v_{K_{h-1}}(d_h)+b_i.
\end{align*}
It follows that
\begin{align*}
v_{K_h}((\sigma_i-1)\alpha_h)
&=v_{K_h}((\tau_{ih}-1)\alpha_h') \\
&\ge\min\{v_{K_{h-1}}((\sigma_i-1)d_h),0\} \\
&\ge\min\{v_{K_{h-1}}(d_h)+b_i,0\} \\
&=\min\{p^{h-1}v_K(d_h)+b_i,0\},
\end{align*}
and hence that
\begin{align} \label{vKnsig}
v_{K_n}((\sigma_i-1)\alpha_h)
&\ge\min\{p^{n-1}v_K(d_h)+p^{n-h}b_i,0\}.
\end{align}

     Set
\[\cc_2=\min\{p^{n-1}v_K(d_h)+p^{n-h}b_i+b_h
-p^{n-1}u_i,\:b_h-p^{n-1}u_i:
1\leq i<h\leq n,\:h\not\in\Sigma_G\}.\]
Then $\cc_1\ge\cc_2$ by (\ref{c1new}) and
(\ref{vKnsig}).  Hence if $\cc_2\ge1$ then $K_n/K$ has a
Galois scaffold with precision $\cc_2$.  Fix
$2\le h\le n$.  Using Lemma~\ref{known}(a) (with
$j=h-1$) we see that the two expressions in the formula
for $\cc_2$ are minimized by taking $i=h-1$.  Therefore
\[\cc_2=\min\{p^{n-1}v_K(d_h)+p^{n-h}b_{h-1}+b_h
-p^{n-1}u_{h-1},\:b_h-p^{n-1}u_{h-1}:2\leq h\leq n,\:
h\not\in\Sigma_G\}.\]
Thus $\cc_2$ is equal to the precision $\cc$ given in
the statement of the theorem.  We have $\cc\ge1$ by
assumptions (\ref{weak1}) and (\ref{weak2}).  It now
follows from Theorem~2.10 of \cite{large} that $K_n/K$
has a Galois scaffold with precision $\cc$.
\end{proof}

\begin{cor} \label{scafcondcor}
Let $(G,\{G_{(i)}\})$ be a $p$-filtered group of order
$p^n$, with $n\ge2$.  Let $D_1,\dots,D_n$ be the
polynomials associated to $(G,\{G_{(i)}\})$ by
Proposition~\ref{Di}, and for $i\not\in\Sigma_G$ let
$l_i$ be the total degree of $D_i$.  Choose positive
integers $u_1<\cdots<u_n$ with $p\nmid u_1$ and
$u_i\equiv u_1\pmod{p^{n-1}}$ for $2\le i\le n$.  Define
$b_1<b_2<\cdots<b_n$ recursively by $b_1=u_1$ and
$b_{i+1}-b_i=p^i(u_{i+1}-u_i)$ for $1\le i\le n-1$.
Assume that $u_1,\dots,u_n$ have been chosen so that
\begin{equation} \label{stronger}
b_i>p^{n-2}l_iu_{i-1}-p^{n-i}b_{i-1}+p^{n-1}u_{i-1}
\end{equation}
for all $2\le i\le n$ with $i\not\in\Sigma_G$.  Then
there exists a tower of extensions
$K=K_0\subset K_1\subset\cdots\subset K_n$ satisfying
(a)--(c) of Theorem~\ref{ramfilt}, plus the additional
condition
\begin{enumerate}[(a)]
\setcounter{enumi}{3}
\item $K_n/K$ admits a Galois scaffold with precision
\begin{equation} \label{cc}
\cc'=\min\{p^{n-i}b_{i-1}-p^{n-2}l_iu_{i-1}+b_i
-p^{n-1}u_{i-1}:2\le i\le n,\:i\not\in\Sigma_G\}.
\end{equation}
\end{enumerate}
\end{cor}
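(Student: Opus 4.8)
The plan is to deduce this corollary from Theorem~\ref{scafcond} by choosing the free parameters $\omega_i$ appropriately and then checking that the quantitative hypotheses \eqref{weak1}, \eqref{weak2} of that theorem follow from the single hypothesis \eqref{stronger}. First I would fix any $a\in K^{\times}$ with $v_K(a)=-u_1$ (possible since $p\nmid u_1$), and for $2\le i\le n$ choose $\omega_i\in K^{\times}$ with $v_K(\omega_i)=-m_i$ where $p^{n-1}m_i=u_i-u_1$; this is an integer by the congruence hypothesis $u_i\equiv u_1\pmod{p^{n-1}}$, and $m_i>0$ since $u_i>u_1$. Setting $\omega_1=1$ and $a_i=a\omega_i^{p^{n-1}}$ as in Theorem~\ref{scafcond}, we get $v_K(a_i)=-u_1+p^{n-1}(-m_i)$... wait, rather $v_K(a_i)=v_K(a)+p^{n-1}v_K(\omega_i)=-u_1-(u_i-u_1)=-u_i$, so indeed $u_i=-v_K(a_i)$ and $0<u_1<\cdots<u_n$ as required.

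Next I would estimate $v_K(d_i)$ from above in terms of the $u_h$. Since $d_i=D_i(\alpha_1,\dots,\alpha_{i-1})$ with $D_i$ of total degree $l_i$, and (as established in the proof of Theorem~\ref{ramfilt}, which applies once we verify its hypotheses) $v_K(\alpha_h)=-p^{-1}u_h\ge-p^{-1}u_{i-1}$ for $1\le h<i$, each monomial of $D_i$ of degree $\le l_i$ has valuation $\ge -p^{-1}l_i u_{i-1}$, so $v_K(d_i)\ge -p^{-1}l_i u_{i-1}$, i.e. $-p^{n-1}v_K(d_i)\le p^{n-2}l_i u_{i-1}$. Feeding this into \eqref{stronger} gives
\[b_i>p^{n-2}l_iu_{i-1}-p^{n-i}b_{i-1}+p^{n-1}u_{i-1}\ge -p^{n-1}v_K(d_i)-p^{n-i}b_{i-1}+p^{n-1}u_{i-1},\]
which is exactly \eqref{weak1}. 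For \eqref{weak2}, note $p^{n-2}l_iu_{i-1}\ge p^{n-1}u_{i-1}$ would require $l_i\ge p$, but in general $l_i\ge1$ and one needs $b_i>p^{n-1}u_{i-1}$ separately; this follows from \eqref{stronger} together with the bound $p^{n-i}b_{i-1}\le p^{n-2}l_iu_{i-1}$ whenever that holds, and otherwise directly — more carefully, since $b_{i-1}\le p^{i-2}u_{i-1}$ by Lemma~\ref{known}(b), we have $p^{n-i}b_{i-1}\le p^{n-2}u_{i-1}\le p^{n-2}l_iu_{i-1}$, so \eqref{stronger} yields $b_i>p^{n-1}u_{i-1}$, giving \eqref{weak2}.

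Having verified \eqref{weak1} and \eqref{weak2}, Theorem~\ref{scafcond} applies and gives conclusions (a)--(c) of Theorem~\ref{ramfilt} plus a Galois scaffold of precision
\[\cc=\min\{p^{n-1}v_K(d_i)+p^{n-i}b_{i-1}+b_i-p^{n-1}u_{i-1},\,b_i-p^{n-1}u_{i-1}:2\le i\le n,\,i\not\in\Sigma_G\}.\]
Using $p^{n-1}v_K(d_i)\ge -p^{n-2}l_iu_{i-1}$ again, the first term in each pair is $\ge p^{n-i}b_{i-1}-p^{n-2}l_iu_{i-1}+b_i-p^{n-1}u_{i-1}$; and since $p^{n-i}b_{i-1}\le p^{n-2}l_iu_{i-1}$ (as shown above), this lower bound is $\le b_i-p^{n-1}u_{i-1}$, so it is also a lower bound for the second term in the pair. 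Hence $\cc\ge\cc'$ with $\cc'$ as in \eqref{cc}, and by the remark after Theorem~\ref{scafcond} (a scaffold of precision $\cc$ is also one of precision $\cc'$ whenever $1\le\cc'\le\cc$) we conclude that $K_n/K$ admits a Galois scaffold of precision $\cc'$, which is positive by \eqref{stronger}. The main obstacle I anticipate is bookkeeping the $p$-powers correctly in the valuation estimate for $d_i$ and in matching the two forms of the precision bound — in particular confirming the inequality $p^{n-i}b_{i-1}\le p^{n-2}l_iu_{i-1}$ cleanly so that the "$b_i-p^{n-1}u_{i-1}$" term is never the binding constraint beyond what \eqref{cc} already records; everything else is a direct substitution into Theorem~\ref{scafcond}.
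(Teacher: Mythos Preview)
Your proof is correct and follows essentially the same approach as the paper: choose $a,\omega_i$ to realize the prescribed $u_i$, use the degree bound $v_K(d_i)\ge -p^{-1}l_iu_{i-1}$ together with Lemma~\ref{known}(b) (giving $p^{n-i}b_{i-1}\le p^{n-2}l_iu_{i-1}$) to derive both \eqref{weak1} and \eqref{weak2} from \eqref{stronger}, and then compare $\cc$ with $\cc'$ via the same two inequalities. The only small refinement is that the equality $v_K(\alpha_h)=-p^{-1}u_h$ needed to bound $v_K(d_i)$ should be justified inductively (the paper invokes the proof of Corollary~\ref{ramfiltcor} for this), which dissolves the apparent circularity in appealing to Theorem~\ref{ramfilt} before its hypotheses are checked.
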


\begin{proof}
It follows from the assumptions on $u_1,\dots,u_n$ that
there are $a,\omega_i\in K^{\times}$ such that
$v_K(a\omega_i^{p^{n-1}})=-u_i$ for $1\le i\le n$.
Since $v_K(a)\equiv-u_1\pmod{p^{n-1}}$ we have
${p\nmid v_K(a)}$.  It follows from (\ref{stronger}) and
Lemma~\ref{known}(b) that $b_i>p^{n-2}l_iu_{i-1}$ for
all $2\le i\le n$ with $i\not\in\Sigma_G$.  Hence the
proof of Corollary~\ref{ramfiltcor} shows that
$p^{n-2}l_iu_{i-1}\ge-p^{n-1}v_K(d_i)$ for all
$2\le i\le n$ such that $i\not\in\Sigma_G$.  Therefore
(\ref{weak1}) follows from (\ref{stronger}).  Using
Lemma~\ref{known}(b) we get
$p^{i-2}l_iu_{i-1}\ge p^{i-2}u_{i-1}\ge b_{i-1}$.  Hence
(\ref{weak2}) also follows from (\ref{stronger}).  Thus
Theorem~\ref{scafcond} gives a tower of extensions
$K=K_0\subset K_1\subset\cdots\subset K_n$ satisfying
the conditions (a)--(d) given there.  The inequalities
above also imply
\begin{align*}
-p^{n-2}l_iu_{i-1}+p^{n-i}b_{i-1}+b_i-p^{n-1}u_{i-1}
&\le p^{n-1}v_K(d_i)+p^{n-i}b_{i-1}+b_i-p^{n-1}u_{i-1}
\\
-p^{n-2}l_iu_{i-1}+p^{n-i}b_{i-1}+b_i-p^{n-1}u_{i-1}
&\le b_i-p^{n-1}u_{i-1}.
\end{align*}
Therefore the scaffold given by
Theorem~\ref{scafcond}(d) has the precision $\cc'$
specified in (\ref{cc}).
\end{proof}

\begin{remark}
Suppose $G\cong C_{p^n}$ is cyclic.  Theorem~2 of
\cite{cyclic} gives a Galois scaffold with precision
\[\cc_0=\min\{b_i-p^nu_{i-1}:2\leq i\leq n\},\]
under the assumption that $b_i>p^nu_{i-1}$ for
$2\leq i\leq n$.  Since $G$ is cyclic we have
$\Sigma_G=\{1\}$.  Furthermore, by Lemma~4(a) of
\cite{cyclic} we get $v_K(d_i)\geq-pu_i$.
As in the proof of Corollary~\ref{scafcondcor} we can
apply Theorem~\ref{scafcond} to produce a Galois
scaffold with precision
\[\cc_1=\min\{p^{n-i}b_{i-1}-p^nu_i+b_i-p^{n-1}u_{i-1}:
2\leq i\leq n\},\]
under the assumption that
$b_i>p^nu_i-p^{n-i}b_{i-1}+p^{n-1}u_{i-1}$ for
$2\leq i\leq n$.  If $n\ge1$ then the precision $\cc_1$
is strictly less than the precision $\cc_0$ of
\cite{cyclic}.  Furthermore, Theorem~2 of \cite{cyclic}
allows more general choices of $a_i\in K$, namely
$a_i=a\omega_i^{p^{n-1}}+e_i$ for any $e_i\in K$ such
that $v_K(e_i)-v_K(a_i)$ satisfies the lower bound given
in assumption (3.3) of \cite{cyclic}.
\end{remark}

\begin{remark}
It follows from Corollary~\ref{scafcondcor} that by
choosing $u_1,\ldots,u_n$ which grow quickly enough we
can make $\cc$ arbitrarily large.
\end{remark}

     The scaffolds that we obtain from
Theorem~\ref{scafcond} can be used to get information
about Galois module structure.  Let $L/K$ be a Galois
extension with Galois group $G$.  Recall that the
associated order of $\OO_L$ in $K[G]$ is defined to be
\[\AA_0=\{\gamma\in K[G]:\gamma(\OO_L)\subset\OO_L\}.\]

\begin{cor}\label{GMS}
Let $(G,\{G_{(i)}\})$ be a $p$-filtered group of order
$p^n$ and let $K_n/K$ be a $G$-extension satisfying the
conditions of Theorem~\ref{scafcond}.  Let
$u_1<\dots<u_n$ be the upper ramification breaks of
$K_n/K$ and let $r(u_1)$ be the least nonnegative
residue of $u_1$ modulo $p^n$.  Assume that $r(u_1)\mid
p^m-1$ for some $1\le m\le n$ and that the precision
$\cc$ of the scaffold provided by Theorem~\ref{scafcond}
satisfies $\cc\ge r(u_1)$.  Then $\OO_{K_n}$ is free
over its associated order $\AA_0$.
\end{cor}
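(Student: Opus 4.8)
The plan is to invoke the theory of Galois scaffolds developed in \cite{bce}, together with the number-theoretic hypothesis $r(u_1)\mid p^m-1$, to reduce the statement to a freeness criterion that the scaffold machinery already provides. Recall that the existence of a Galois scaffold with sufficiently large precision gives, for every ideal (in particular for $\OO_{K_n}$ itself), a combinatorial description of the associated order and a criterion for freeness in terms of the residues modulo $p^n$ of the valuations involved. Since Theorem~\ref{scafcond} furnishes a scaffold for $K_n/K$ with precision $\cc$, and we are assuming $\cc\ge r(u_1)$, the hypotheses needed to apply the relevant theorem of \cite{bce} will be in place.

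First I would set $d=v_{K_n}(\OO_{K_n})$-type data straight: since $K_n/K$ is totally ramified of degree $p^n$, the ring of integers $\OO_{K_n}$ is the ideal generated by elements of valuation $\ge0$, so it corresponds to the residue class $0$ modulo $p^n$; more generally the freeness question for $\OO_{K_n}$ over $\AA_0$ is governed by $r(u_1)$, the residue of the first ramification break. The key step is then to quote the main freeness theorem from \cite{bce}: given a Galois scaffold of precision $\cc$ for $K_n/K$ with ramification breaks all congruent modulo $p^n$ (which holds here because $u_i\equiv u_1\pmod{p^{n-1}}$ forces $b_i\equiv b_1\pmod{p^n}$, as was already observed in the proof of Theorem~\ref{scafcond}), the ideal $\OO_{K_n}$ is free over its associated order provided $\cc\ge r$ where $r=r(u_1)$ and $r\mid p^m-1$ for some $m\le n$. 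This last divisibility condition is exactly what makes the relevant ``$r$ divides $p^m-1$'' hypothesis of the Galois module structure theorem in \cite{bce} applicable, so the conclusion follows directly.

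Concretely, the steps in order are: (1) record that Theorem~\ref{scafcond} gives a Galois scaffold $(\{\Psi_i\},\{\lambda_t\})$ for $K_n/K$ of precision $\cc$; (2) verify the congruence condition on the breaks $b_i\bmod p^n$ needed to put us in the setting of \cite{bce} (this is immediate from the hypothesis $u_i\equiv u_1\pmod{p^{n-1}}$ in Theorem~\ref{scafcond}); (3) identify $r(u_1)$ as the residue controlling the ideal $\OO_{K_n}=\mathfrak{P}_{K_n}^0$; (4) check that $\cc\ge r(u_1)$ and $r(u_1)\mid p^m-1$ are precisely the hypotheses of the freeness criterion in \cite{bce}; and (5) conclude that $\OO_{K_n}$ is $\AA_0$-free of rank one.

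The main obstacle will be step (2)–(4): matching our notation and normalizations to those of \cite{bce} exactly, in particular being careful about which residue (that of $u_1$, of $b_1$, or of a shifted quantity such as $-b_1$ or $d(\OO_{K_n})$) is the one the cited theorem requires, and confirming that the precision bound we have is stated with the same sign conventions. Since all the ramification breaks are congruent mod $p^n$ here, the scaffold is ``uniform'' in the sense of \cite{bce}, which is the case where the cleanest freeness statements hold; once the dictionary between the two papers is pinned down, the argument is a direct citation rather than a new computation.
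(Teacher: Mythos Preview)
Your proposal is correct and follows essentially the same approach as the paper: verify that $b_i\equiv b_1\pmod{p^n}$ from $u_i\equiv u_1\pmod{p^{n-1}}$, note the scaffold from Theorem~\ref{scafcond} has precision $\cc\ge r(u_1)$, and invoke the freeness criterion (Theorem~4.8) of \cite{bce}. The uncertainty you flag in steps (2)--(4) is resolved in the paper simply by observing that $r(b_n)=r(b_1)=r(u_1)$, since $b_1=u_1$ and all $b_i$ are congruent modulo $p^n$; this is the residue required by Theorem~4.8 of \cite{bce}.
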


\begin{proof}
Since $u_i\equiv u_1\pmod{p^{n-1}}$ for $1\le i\le n$ we
have $b_i\equiv b_1\pmod{p^n}$.  It follows that
$r(b_n)=r(b_1)=r(u_1)$.  Theorem~\ref{scafcondcor}
shows that $K_n/K$ has a Galois scaffold with precision
$\cc\ge r(u_1)$.  Hence the corollary follows from
Theorem~4.8 of \cite{bce}.
\end{proof}

     Let $K$ be a local field with residue
characteristic $p$.  Let $G$ be a finite group and let
$H$ be an $\OO_K$-order in $K[G]$.  Say that $H$ is a
{\em Hopf order} if $H$ is a Hopf algebra over $\OO_K$
with respect to the operations inherited from the
$K$-Hopf algebra $K[G]$.  Say that the Hopf order
$H\subset K[G]$ is {\em realizable} if there is a
$G$-extension $L/K$ such that $H$ is equal to the
associated order $\AA_0$ of $\OO_L$ in $K[G]$.  A great
deal of eﬀort has gone into constructing and classifying
Hopf orders in $K[C_p^n]$ and $K[C_{p^n}]$; see
Chapter~12 of \cite{hopf} for a summary.  The only
method known for constructing Hopf orders in $K[G]$ for
an arbitrary $p$-group $G$ was given by Larson
\cite{lar}.  However, Larson's group-theoretic approach
does not give a method for finding Hopf orders which are
realizable.  Therefore it is interesting that in
the case where $\ch(K)=p$ the scaffolds from
Theorem~\ref{scafcond} can be used to construct
realizable Hopf orders in $K[G]$.  Since these Hopf
orders are constructed using the main result of
\cite{large}, they are ``truncated exponential Hopf
orders'' in the sense of \cite[\S12.9]{hopf}.  Thus one
consequence of the following corollary is that for all
$p$-groups $G$, truncated exponential Hopf orders exist
in $K[G]$.

\begin{cor}\label{Hopforder}
Let $(G,\{G_{(i)}\})$ be a $p$-filtered group of order
$p^n$ and let $K_n/K$ be a $G$-extension satisfying the
conditions of Theorem~\ref{scafcond}.  Let
$u_1<\dots<u_n$ be the upper ramification breaks of
$K_n/K$ and assume that $u_1\equiv-1\bmod p^n$.  Assume
further that the precision $\cc$ of the scaffold
provided by Theorem~\ref{scafcond} satisfies
$\cc\geq p^n-1$.  Then the associated order $\AA_0$ of
$\OO_{K_n}$ in $K[G]$ is a Hopf order.
\end{cor}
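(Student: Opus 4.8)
The plan is to use the Galois scaffold $(\{\Psi_i\},\{\lambda_t\})$ produced by Theorem~\ref{scafcond} to describe the associated order $\AA_0$ explicitly, and then to recognize it as a truncated exponential Hopf order in the sense of \cite[\S12.9]{hopf}. First I would unwind the hypotheses. Since $u_1\equiv-1\pmod{p^n}$, its least nonnegative residue is $p^n-1$, and since $u_i\equiv u_1\pmod{p^{n-1}}$ we get $b_i\equiv b_1=u_1\equiv-1\pmod{p^n}$, so $r(b_i)=p^n-1$ for every $i$. Taking $m=n$ this shows the hypotheses of Corollary~\ref{GMS} hold, so $\OO_{K_n}$ is at least $\AA_0$-free; but the real content is the Hopf structure, and for that the uniformity ``all residues equal $p^n-1$'' is what I would exploit.

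Next, the explicit form of $\AA_0$. For $\vec e=(e_1,\dots,e_n)$ with $0\le e_i\le p-1$ set $\Psi^{(\vec e)}=\Psi_n^{e_n}\cdots\Psi_1^{e_1}\in K[G]$; recall $\Psi_i\in K[G_{(n-i)}]$. A scaffold of precision $\cc\ge p^n-1$ lets one compute $\Psi^{(\vec e)}(\lambda_t)$ up to valuation $\ge t+\delta(\vec e)+\cc$, where the shift $\delta(\vec e)$ is determined by $\vec e$ and the $b_i$. Let $w(\vec e)$ be the largest integer for which $\pi_K^{-w(\vec e)}\Psi^{(\vec e)}$ maps $\OO_{K_n}$ into itself. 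Because $r(b_i)=p^n-1$ for all $i$, the $\delta(\vec e)$ are mutually congruent modulo $p^n$, so $w$ is given by a single uniform formula in $\vec e$ rather than by residue-dependent ceilings. Using the scaffold identities of \cite{bce}, as in the freeness argument behind Corollary~\ref{GMS}, one checks that the $p^n$ elements $\pi_K^{-w(\vec e)}\Psi^{(\vec e)}$ form an $\OO_K$-basis of $\AA_0$; this exhibits $\AA_0$ as a truncated exponential order.

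It then remains to verify the Hopf axioms for this order inside $K[G]$. It is automatically a subring containing $1$ (being an associated order), the counit is the restriction of the augmentation and sends $\AA_0$ into $\OO_K$, and the antipode is the $K$-algebra anti-automorphism $\sigma\mapsto\sigma^{-1}$, which carries the explicit basis to itself by the same valuation bookkeeping. The crux — and the step I expect to be the only real work — is closure under the comultiplication $\Delta(\sigma)=\sigma\otimes\sigma$: expanding $\Delta(\pi_K^{-w(\vec e)}\Psi^{(\vec e)})$ through the recursive definition of the $\Psi_i$ and the identity $\Delta(\sigma-1)=(\sigma-1)\otimes1+1\otimes(\sigma-1)+(\sigma-1)\otimes(\sigma-1)$ yields a $K$-linear combination of elements $\pi_K^{-w(\vec f)}\Psi^{(\vec f)}\otimes\pi_K^{-w(\vec g)}\Psi^{(\vec g)}$, and one must show every coefficient lies in $\OO_K$. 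This is exactly where $u_1\equiv-1\pmod{p^n}$ is essential: it is the borderline normalization in which $w$ absorbs the divided-power/binomial denominators appearing in $\Delta(\Psi^{(\vec e)})$, so that each term lands in $\AA_0\otimes_{\OO_K}\AA_0$, while $\cc\ge p^n-1$ guarantees the scaffold's error terms are too small to disturb these estimates. An equivalent and often cleaner route is to pass to the $\OO_K$-linear dual: $\AA_0$ is a Hopf order precisely when the dual lattice in the function algebra $\mathrm{Maps}(G,K)$ is closed under convolution, and the ``$\equiv-1$'' normalization is self-dual for this pairing, so the same estimate suffices on the dual side. Finally, if \cite{bce} already records the implication ``Galois scaffold of precision $\ge p^n-1$ with all breaks $\equiv-1\pmod{p^n}$ $\Rightarrow$ $\AA_0$ a Hopf order'', the corollary follows at once from the first paragraph.
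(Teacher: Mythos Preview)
Your approach is quite different from the paper's, and considerably more labor-intensive. The paper does not verify the Hopf axioms for $\AA_0$ by hand at all. Instead it observes three structural facts and then quotes a general criterion of Bondarko: (i) $\OO_{K_n}$ is free over $\AA_0$ (this is exactly your first paragraph, via Corollary~\ref{GMS}); (ii) since $\ch(K)=p$, the regular representation of $G$ over $K$ is indecomposable, so $\OO_{K_n}$ is indecomposable as an $\OO_K[G]$-module; (iii) because $b_i\equiv-1\pmod{p^n}$ for all $i$, the different of $K_n/K$ is generated by an element of $K$. Proposition~4.5.2 of \cite{bon2} then says that under (i)--(iii) the associated order $\AA_0$ is a Hopf order, and the proof is finished in four lines. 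You never mention (ii) or (iii), and your fallback sentence points to \cite{bce}, which does not contain the needed implication; the relevant black box is Bondarko's.

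Your direct route---write down the truncated-exponential basis $\pi_K^{-w(\vec e)}\Psi^{(\vec e)}$ and check closure under $\Delta$---is sound in spirit, and indeed the remark before Corollary~\ref{Hopforder} notes that the resulting orders are truncated exponential Hopf orders. But the step you flag as ``the only real work'' is genuinely nontrivial: the $\Psi_i$ are defined recursively in \cite{large} via products of truncated exponentials of several $\sigma_j$, so expanding $\Delta(\Psi^{(\vec e)})$ produces many cross-terms whose integrality is not a one-line estimate, and the scaffold only controls the action on $K_n$, not directly the coproduct in $K[G]\otimes K[G]$. This can be made to work, but it is a substantial computation that your proposal only gestures at. The paper's route trades that computation for the single observation about the different (which is the real arithmetic content of the hypothesis $u_1\equiv-1\pmod{p^n}$) and an appeal to Bondarko.
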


\begin{proof}
It follows from the preceding corollary that $\OO_{K_n}$
is free over $\AA_0$.  The action of $K[G]$ on $K_n$ is
the regular representation, which is indecomposable
since $\ch(K)=p$.  It follows that $\OO_{K_n}$ is
indecomposable as an $\OO_K[G]$-module.  Furthermore,
since $b_i\equiv-1\pmod{p^n}$ for $1\le i\le n$, the
different of $L/K$ is generated by an element of $K$.
Hence by Proposition~4.5.2 of \cite{bon2} we deduce that
$\AA_0$ is a Hopf order in $K[G]$.
\end{proof}

\begin{remark}
Let $K$ be a local field of characteristic 0 with
residue characteristic $p$ and let $G$ be a finite
noncyclic $p$-group.  In \cite{bd} Bondarko and Dievsky
gave necessary and sufficient conditions on a Hopf
order $H\subset K[G]$ for $H$ to be realizable by a
totally ramified $G$-extension $L/K$ whose different is
generated by an element of $K$.  However, it is
difficult to construct Hopf algebras that satisfy
their criterion.  Byott \cite{mono} showed that if $G$
is abelian and the $\OO_K$-dual $H^*$ of $H$ is a
monogenic $\OO_K$-algebra then $H$ is realizable.
\end{remark}

\section{Dihedral examples}

Let $G$ be the dihedral group of order 16.  Write
$G=\langle \sigma,\tau\rangle$ with $\sigma$ a rotation
of order 8 and $\tau$ a reflection.  We define a
2-filtration of $G$ by setting $G_{(0)}=G$,
$G_{(1)}=\langle\sigma^2,\tau\rangle$,
$G_{(2)}=\langle\sigma^2\rangle$,
$G_{(3)}=\langle\sigma^4\rangle$, and $G_{(4)}=\{1\}$.
Then $\Phi(G)=\langle\sigma^2\rangle=G_{(2)}$, so we
have $\Sigma_G=\{1,2\}$.  Let $K$ be a local field of
characteristic $p=2$.  We will use the methods we have
developed to give three examples of $G$-extensions
$K_4/K$ with specified properties.

     We first construct a generic $G$-extension of
rings using the results of Section~\ref{generic}.  Since
$\Sigma_G=\{1,2\}$ we have $D_1=D_2=0$.  Therefore
$X_1,X_2$ are elements of $S_2\cong\F_2[Y_1,Y_2]$
defined by $X_1=Y_1^2-Y_1$ and $X_2=Y_2^2-Y_2$.  To
determine $D_3$ we use the procedure outlined in the
paragraph following Proposition~\ref{tensor}.  Set
$\sigmab=\sigma G_{(2)}$, $\taub=\tau G_{(2)}$,
$\sigmat=\sigma G_{(3)}$, and $\taut=\tau G_{(3)}$.
Then $(\sigmab-1)Y_1=1$, $(\sigmab-1)Y_2=0$,
$(\taub-1)Y_1=0$, and $(\taub-1)Y_2=1$.  Let
$u:G/G_{(2)}\ra G/G_{(3)}$ be the section of the
projection $\pi:G/G_{(3)}\ra G/G_{(2)}$ whose image is
$\{\onet,\sigmat,\taut,\sigmat\taut\}$, and let $\chi$
be the unique isomorphism from $G_{(2)}/G_{(3)}$ to
$\F_2$.  Then the 2-cocycle
$c:(G/G_{(2)})\times(G/G_{(2)})\ra\F_2$ defined by
$c(g,h)=\chi(u(g)u(h)u(gh)^{-1})$ represents the class
in $H^2(G/G_{(2)},\F_2)$ which corresponds to the group
extension $\pi:G/G_{(3)}\ra G/G_{(2)}$.  We find that
the cochain $(s_g)_{g\in G/G_{(2)}}$ defined by
$s_{\oneb}=0$, $s_{\sigmab}=s_{\taub}=Y_1$, and
$s_{\sigmab\taub}=1$, satisfies
$c(g,h)=s_g+g(s_h)-s_{gh}$ for all $g,h\in G/G_{(2)}$.
We have
\begin{align*}
\wp(s_{\oneb})&=0=(\oneb-1)(X_1(Y_1+Y_2)) \\
\wp(s_{\sigmab})
&=X_1=(\sigmab-1)(X_1(Y_1+Y_2)) \\
\wp(s_{\taub})&=X_1=(\taub-1)(X_1(Y_1+Y_2)) \\
\wp(s_{\sigmab\taub})
&=0=(\sigmab\taub-1)(X_1(Y_1+Y_2)).
\end{align*}
Therefore we can take
\begin{align*}
D_3&=X_1(Y_1+Y_2)=(Y_1^2-Y_1)(Y_1+Y_2) \\
X_3&=Y_3^2-Y_3-(Y_1^2-Y_1)(Y_1+Y_2).
\end{align*}
A similar but more complicated computation based on the
formulas $(\sigmat-1)Y_1=1$, $(\sigmat-1)Y_2=0$,
$(\sigmat-1)Y_3=Y_1$, $(\taut-1)Y_1=0$,
$(\taut-1)Y_2=1$, and $(\taut-1)Y_3=Y_1$ gives
\[D_4=X_1^3Y_1+X_1^2X_2Y_2+X_1^2Y_1Y_2+ 
X_1(Y_1^3+Y_1Y_3+Y_2Y_3+Y_2)+X_1X_3(Y_1+Y_2)+
X_3(Y_3+Y_2).\]
We can represent $D_4$ as a polynomial in $Y_1,Y_2,Y_3$
by expressing $X_1,X_2,X_3$ in terms of $Y_1,Y_2,Y_3$
using the formulas given above.

     We now use the generic $G$-extension of rings
that we have constructed to get a family of
$G$-extensions of $K$.  Let $a_1,a_2,a_3,a_4\in K$ and
set $u_i=-v_K(a_i)$.  Assume that
$0<u_1<u_2<u_3<u_4$ and $u_1,u_2,u_3,u_4$ are odd.  Define $b_1,b_2,b_3,b_4$ by $b_1=u_1$ and
$b_{i+1}=b_i+2^i(u_{i+1}-u_i)$ for $1\le i\le3$.  Set
$K_4=K(\alpha_1,\alpha_2,\alpha_3,\alpha_4)$, where the
$\alpha_i$ are defined recursively by
$\alpha_i^2-\alpha_i=d_i+a_i$, with $d_1=d_2=0$,
$d_3=D_3(\alpha_1,\alpha_2)$, and
$d_4=D_4(\alpha_1,\alpha_2,\alpha_3)$.  Since
$u_1,u_2$ are distinct, positive, and odd,
$\{a_1+\wp(K),a_2+\wp(K)\}$ is linearly
independent over $\F_p$.  Therefore it follows from
Theorem~\ref{main} that $K_4/K$ is a $G$-extension.  By
putting additional conditions on $a_1,a_2,a_3,a_4$ we
will get examples of $G$-extensions which have various
interesting properties.

\begin{example}
To satisfy the hypotheses of Theorem~\ref{ramfilt} we
need to choose $a_i$ so that $b_i>p^{i-1}v_K(d_i)$ for
$i=3,4$.  We first choose $a_1,a_2$ so that $0<u_1<u_2$
are odd.  This gives $b_1=u_1$, $b_2=2u_2-u_1$, and
$v_K(d_3)=-u_1-\frac12u_2$.  We must choose $a_3$ so
that $u_3$ is odd and $b_3=4u_3-2u_2-u_1$ is greater
than $-4v_K(d_3)=4u_1+2u_2$.  This is equivalent to
$u_3>\frac54u_1+u_2$.  Under this assumption we have
\[v_K(d_4)\ge
\tst\min\{-u_1-\frac12u_2-u_3,-\frac32u_3\}\]
and $b_4=8u_4-4u_3-2u_2-u_1$.  Therefore it suffices to
choose $a_4$ so that $u_4$ satisfies
\[8u_4-4u_3-2u_2-u_1>\max\{8u_1+4u_2+8u_3,12u_3\}.\]
This is equivalent to
\[u_4>\tst\max\{\frac{9}{8}u_1+\frac{3}{4}u_2
+\frac{3}{2}u_3,\,\frac{1}{8}u_1+\frac{1}{4}u_2+2u_3\}.\]
If these conditions are satisfied then it follows from
Theorem~\ref{ramfilt} that $K_4/K$ is a $G$-extension
whose upper ramification breaks are $u_1,u_2,u_3,u_4$.
To get a specific example we let $\pi_K$ be a
uniformizer for $K$ and set $a_1=\pi_K^{-1}$,
$a_2=\pi_K^{-3}$, $a_3=\pi_K^{-5}$, $a_4=\pi_K^{-11}$.
This gives a $G$-extension $K_4/K$ with upper
ramification breaks $1,3,5,11$ and lower ramification
breaks $1,5,13,61$.
\end{example}

\begin{example}
In order to use Theorem~\ref{scafcond} to get a
$G$-extension $K_4/K$ with a Galois scaffold we write
$a_i=a\omega_i^8$ and
consider the possibilities for the ramification data of
$K_4/K$.  Choose $u_1=b_1=1$.  We need $u_2>u_1$ with
$u_2\equiv u_1\pmod8$, so we choose $u_2=9$.  It follows
that $b_2=1+2(9-1)=17$.  We need $u_3>u_2$ with
$u_3\equiv1\pmod8$ such that $b_3=17+4(u_3-9)$ satisfies
\begin{align*}
b_3&>\tst8\cdot\frac{11}{2}-2\cdot17+8\cdot9=82 \\
b_3&>\tst8\cdot9=72.
\end{align*}
We choose $u_3=33$, so $b_3=113$.  Finally, we need
$u_4>u_3$ with $u_4\equiv1\pmod8$ such that
$b_4=113+8(u_4-33)$ satisfies
\begin{align*}
b_4&>\tst8\cdot\max\{1+\frac12\cdot9+33,\frac32\cdot33\}
-113+8\cdot33=547 \\
b_4&>8\cdot33=264.
\end{align*}
We choose $u_4=89$, which gives $b_4=561$.  This
ramification data can be realized by taking
$a=\pi_K^{-1}$, $\omega_1=1$, $\omega_2=\pi_K^{-1}$,
$\omega_3=\pi_K^{-4}$, and $\omega_4=\pi_K^{-11}$.
According to Theorem~\ref{scafcond} and
Remark~\ref{gaps}, these choices give a $G$-extension
$K_4/K$ which has a Galois scaffold with precision
\[\cc=\min\{b_3-82,b_3-72,b_4-547,b_4-264\}=14.\]
It then follows from Corollary~\ref{GMS} that
$\OO_{K_4}$ is free over its associated order $\AA_0$.
\end{example}

\begin{example}
We wish to use Corollary~\ref{Hopforder} to produce a
$G$-extension $K_4/K$ such that the associated order
$\AA_0$ of $\OO_{K_4}$ in $K[G]$ is a Hopf order.  Once
again we set $a_i=a\omega_i^8$.  We need to determine
ramification data for $K_4/K$ that satisfies the
hypotheses of Corollary~\ref{Hopforder}.  The first
requirement is $u_1\equiv-1\pmod{16}$, so we choose
$u_1=b_1=15$.  We need $u_2>u_1$ with
$u_2\equiv-1\pmod8$.  We choose $u_2=23$ and hence
$b_2=31$.  To apply Corollary~\ref{Hopforder} we need to
construct an extension which has a scaffold with
precision $\cc\ge2^4-1=15$.  Therefore we wish to find
$u_3\equiv-1\pmod8$ such that $b_3=31+4(u_3-23)$ makes
the gaps in inequalities (\ref{weak1}) and (\ref{weak2})
greater than or equal to 15 (see Remark~\ref{gaps}).
Hence we require
\begin{align*}
b_3&\geq\tst8\cdot\frac{53}{2}-2\cdot31+8\cdot23+15=349, \\
b_3&\geq\tst8\cdot23+15=199.
\end{align*}
By choosing $u_3=103$ we get $b_3=351$, which satisfies
both inequalities.  Similarly, we need $u_4>u_3$ with
$u_4\equiv-1\pmod8$ such that $b_4=351+8(u_4-103)$
satisfies
\begin{align*}
b_4&\geq\tst8\cdot\max\{15+\frac12\cdot23+103,\frac32\cdot103\}
-351+8\cdot103+15=1724, \\
b_4&\geq8\cdot103+15=839.
\end{align*}
We choose $u_4=279$, which gives $b_4=1759$.  We get a
$G$-extension $K_4/K$ with this ramification data by
taking $a=\pi_K^{-15}$, $\omega_1=1$,
$\omega_2=\pi_K^{-1}$, $\omega_3=\pi_K^{-11}$, and
$\omega_4=\pi_K^{-33}$.  Using the definitions of
$\mu_{ij}$ in (\ref{mueps}) and $t_{ij}$ in
(\ref{defn-Y}) we get
\begin{align*}
\mu_{12}&=\frac{1}{\pi_K^4}, \\[1mm]
\mu_{13}&=\frac{1+\pi_K^{20}}{\pi_K^{42}(1+\pi_K^2)},
\\[1mm]
\mu_{14}&=\frac{1+\pi_K^{10}+\pi_K^{44}+\pi_K^{74}
+\pi_K^{76}+\pi_K^{96}}{\pi_K^{109}(1+\pi_K+\pi_K^{20}
+\pi_K^{23}+\pi_K^{31}+\pi_K^{33})}, \\[1mm]
\mu_{23}&=\frac{1+\pi_K^{22}}{\pi_K^{40}(1+\pi_K^{2})},
\\[1mm]
\mu_{24}&=\frac{1+\pi_K^{11}+\pi_K^{44}
+\pi_K^{99}}{\pi_K^{108}(1+\pi_K+\pi_K^{20}+\pi_K^{23}
+\pi_K^{31}+\pi_K^{33})}, \\[1mm]
\mu_{34}&=\frac{1+\pi_K+\pi_K^{64}+\pi_K^{67}
+\pi_K^{97}+\pi_K^{99}}{\pi_K^{88}(1+\pi_K+\pi_K^{20}
+\pi_K^{23}+\pi_K^{31}+\pi_K^{33})}.
\end{align*}
Definition~2.7 of \cite{large} computes $\Theta_i\in
K[G]$ recursively in terms of the ``truncated
exponential'' $X^{[Y]}=1+Y(X-1)$.  We find that
$\Theta_4=\sigma^4$, $\Theta_3
=\sigma^2\Theta_4^{[-\mu_{3,4}]}$, $\Theta_2
=\sigma\Theta_3^{[-\mu_{2,3}]}\Theta_4^{[-\mu_{2,4}]}$,
and $\Theta_1=\tau\Theta_2^{[-\mu_{1,2}]}
\Theta_3^{[-\mu_{1,3}]}\Theta_4^{[-\mu_{1,4}]}$.  For
$1\le i\le4$ set $M_i=(b_i+1)/p^i$.  It follows from
equation (34) of \cite{large} that the associated order
$\AA_0$ of $\OO_{K_4}$ in $K[G]$ is
\begin{align*}
\AA_0&=\OO_{K}\left[\frac{\Theta_4-1}{\pi_K^{M_4}},
\frac{\Theta_3-1}{\pi_K^{M_3}},
\frac{\Theta_2-1}{\pi_K^{M_2}},
\frac{\Theta_1-1}{\pi_K^{M_1}}\right] \\
&=\OO_{K}\left[ \frac{\Theta_4-1}{\pi_K^{110}},\frac{\Theta_3-1}{\pi_K^{44
}},\frac{\Theta_2-1}{\pi_K^8},\frac{\Theta_1-1}{\pi_K^8}\right].
\end{align*}
By Corollary~\ref{Hopforder} we see that $\AA_0$ is a
Hopf order in $K[G]$.
\end{example}


\begin{thebibliography}{99}

\bibitem{bon2} M. V. Bondarko, Local Leopoldt's problem
for ideals in totally ramified $p$-extensions of
complete discrete valuation fields, Algebraic number
theory and algebraic geometry, 27--57, Contemp.\ Math.\
{\bf300}, Amer.\ Math.\ Soc.\ Providence, RI, 2002.

\bibitem{bd} M. V. Bondarko and A. V. Dievsky,
Nonabelian associated orders in the case of wild
ramification, Zap.\ Nauchn.\ Sem.\ S.-Peterburg.\
Otdel.\ Mat.\ Inst.\ Steklov (POMI) {\bf356} (2008),
Voprosy Teorii Predstavleniĭ Algebr i Grupp.\ 17, 5--45,
189; translation in J. Math.\ Sci.\ (N.Y.) 156 (2009),
855--876.

\bibitem{mono} N. P. Byott, Monogenic Hopf orders and
associated orders of valuation rings, J. Algebra
{\bf275} (2004), 575--599.

\bibitem{bce} N. P. Byott, L. N. Childs, and G. G.
Elder, Scaffolds and generalized integral Galois module
structure, Ann.\ Inst.\ Fourier (Grenoble) {\bf68}
(2018), 965--1010.

\bibitem{large} N. P. Byott and G. G. Elder, Sufficient
conditions for large Galois scaffolds, J.  Number Theory
{\bf 182} (2018), 95--130.

\bibitem{hopf} L. Childs, C. Greither, K. Keating, A.
Koch, T. Kohl, P. Truman, and R. Underwood, {\em Hopf
Algebras and Galois Module Theory}, American
Mathematical Society, Providence, RI, 2022.

\bibitem{DI} F. DeMeyer and E. Ingraham, Separable
algebras over commutative rings, Lecture Notes in
Mathematics {\bf181}, Springer-Verlag, Berlin-New York
1971.

\bibitem{cyclic} G. G. Elder and K. Keating, Galois
scaffolds for cyclic $p^n$-extensions in characteristic
$p$, Res.\ Number Theory {\bf8} (2022) Paper No.\ 75, 16
pp.

\bibitem{FV} I. B. Fesenko and S. V. Vostokov, {\em
Local Fields and their Extensions}, Amer.\ Math.\ Soc.,
Providence, RI, 2002.

\bibitem{lar} R. Larson, Hopf algebra orders determined
by group valuations, J. Algebra {\bf38} (1976),
414--452.

\bibitem{maus} E. Maus, On the jumps in the series of
ramifications groups, Colloque de Th\'{e}orie des
Nombres (Univ.\ Bordeaux, Bordeaux, 1969), pp. 127--133.
Bull.\ Soc.\ Math.\ France, M\'{e}m.\ 25, Soc.\ Math.\
France, Paris, 1971.

\bibitem{mil} G. A. Miller, The $\phi$-subgroup of a
group, Trans.\ Amer.\ Math.\ Soc.\ {\bf16} (1915),
20--26.

\bibitem{salt} D. J. Saltman, Noncrossed product
$p$-algebras and Galois $p$-extensions, J. Algebra
{\bf52} (1978), 302--314.

\bibitem{cl} J.-P. Serre, {\em Corps Locaux}, Hermann,
Paris, 1962.

\end{thebibliography}
\end{document}